\def\algspacing{\alg@unmargin}
\newlength{\algorithmwidth}
\theoremstyle{plain}
\newtheorem{theorem}{Theorem}[section]
\newtheorem{proposition}[theorem]{Proposition}
\newtheorem{corollary}[theorem]{Corollary}
\theoremstyle{definition}
\theoremstyle{remark}
\newtheorem*{remark}{Remark}
\numberwithin{theorem}{section}
\numberwithin{equation}{section}
\DeclareMathOperator*{\argmin}{arg min}
\newcommand{\defby}{\overset{\mathrm{\scriptscriptstyle{def}}}{=}}
\newcommand{\R}{\mathbb{R}}
\newcommand{\cD}{\mathcal{D}}
\newcommand{\bB}{{\boldsymbol{B}}}
\newcommand{\bD}{\boldsymbol{D}}
\newcommand{\bX}{\boldsymbol{X}}
\newcommand{\bx}{\boldsymbol{x}}
\newcommand{\bPsi}{\boldsymbol{\Psi}}
\newcommand{\bA}{\boldsymbol{A}}
\newcommand{\by}{\boldsymbol{y}}
\newcommand{\be}{\boldsymbol{e}}
\newcommand{\ba}{\boldsymbol{a}}
\newcommand{\bb}{\boldsymbol{b}}
\newcommand{\bz}{\boldsymbol{z}}
\newcommand{\bw}{\boldsymbol{w}}
\newcommand{\xls}{\boldsymbol{x}_{LS}}
\newcommand{\xhl}{\boldsymbol{x}_{HL}}
\newcommand{\vct}[1]{\bm{#1}}
\newcommand{\mtx}[1]{\bm{#1}}
\def \xcur {\vct{x}_k}
\def \xnext {\vct{x}_{k+1}}
\def \A {\mtx{A}}
\begin{document}

\title[]{Batched Stochastic Gradient Descent with Weighted Sampling} 
\author{Deanna Needell and Rachel Ward}
\date{\today}

\begin{abstract}
We analyze a batched variant of Stochastic Gradient Descent (SGD) with weighted sampling distribution for smooth and non-smooth objective functions.  We show that by distributing the batches computationally, a significant speedup in the convergence rate is provably possible compared to either batched sampling or weighted sampling alone.  We propose several computationally efficient schemes to approximate the optimal weights, and compute proposed sampling distributions explicitly for the least squares and hinge loss problems.  We show both analytically and experimentally that substantial gains can be obtained.
\end{abstract}

\maketitle

\section{Mathematical Formulation}

We consider minimizing an objective function of the form
\begin{equation}\label{obj}
F(\bx) = \frac{1}{n}\sum_{i=1}^n f_i(\bx) = \mathbb{E} f_i(\bx).
\end{equation}
One important such objective function is the least squares objective for linear systems.  Given an $n\times m$ matrix $\bA$ with rows $\ba_1, \ldots, \ba_n$ and a vector $\bb\in\R^n$, one searches for the least squares solution $\xls$ given by
\begin{equation}\label{LS}
\xls \defby \argmin_{\bx\in\R^m} \frac{1}{2}\|\bA\bx - \bb\|_2^2 = \argmin_{\bx\in\R^m}\frac{1}{n}\sum_{i=1}^n \frac{n}{2}(b_i - \langle\ba_i, \bx\rangle)^2 = \argmin_{\bx\in\R^m}\mathbb{E} f_i(\bx),
\end{equation}
where the functionals are defined by $f_i(\bx) = \frac{n}{2}(b_i - \langle\ba_i, \bx\rangle)^2$.

Another important example is the setting of support vector machines where one wishes to minimize the hinge loss objective given by
\begin{equation}\label{HL}
\xhl \defby \argmin_{\bw\in\R^m} \frac{1}{n}\sum_{i=1}^n[y_i\langle \bw, \bx_i\rangle]_+ + \frac{\lambda}{2}\|\bw\|_2^2.
\end{equation}
Here, the data is given by the matrix $\bX$ with rows $\bx_1, \ldots, \bx_n$ and the labels $y_i \in \{-1, 1\}$.  The function $[z]_+ \defby \max(0, z)$ denotes the positive part.  We view the problem \eqref{HL} in the form \eqref{obj} with $f_i(\bw) = [1- y_i\langle \bw, \bx_i\rangle]_+$ and regularizer $\frac{\lambda}{2}\|\bw\|_2^2$.

The stochastic gradient descent (SGD) method solves problems of the form \eqref{obj} by iteratively moving in the gradient direction of a randomly selected functional.  SGD can be described succinctly by the update rule:
$$
\xnext \leftarrow \xcur - \gamma\nabla f_{i_k}(\xcur),
$$
where index $i_k$ is selected randomly in the $k$th iteration, and an initial estimation $\vct{x}_0$ is chosen arbitrarily.  Typical implementations of SGD select the functionals uniformly at random, although if the problem at hand allows a one-pass preprocessing of the functionals, certain \textit{weighted} sampling distributions preferring functionals with larger variation can provide better convergence (see e.g. \cite{needell2014stochastic,zhao2014stochastic} and references therein).  In particular, Needell et al. show that selecting a functional with probability proportional to the Lipschitz constant of its gradient yields a convergence rate depending on the \textit{average} of all such Lipschitz constants, rather than the supremum \cite{needell2014stochastic}.  An analogous result in the same work shows that for non-smooth functionals, the probabilities should be chosen proportional to the Lipschitz constant of the functional itself.

Another variant of SGD utilizes so-called \textit{mini-batches}; in this variant, a batch of functionals is selected in each iteration rather than a single one \cite{cotter2011better,agarwal2011distributed,dekel2012optimal,takavc2013mini}.  The computations over the batches can then be run in parallel and speedups in the convergence are often quite significant. 

{\bfseries Contribution.}  We propose a weighted sampling scheme to be used with mini-batch SGD.  We show that when the batches can be implemented in parallel, significant speedup in convergence is possible.  In particular, we analyze the convergence using \textit{efficiently computed distributions} for the least squares and hinge loss objectives, the latter being especially challenging since it is non-smooth.  We demonstrate theoretically and empirically that weighting the distribution and utilizing batches of functionals per iteration together form a complementary approach to accelerating convergence, highlighting the precise improvements and weighting schemes for these settings of practical interest.

{\bfseries Organization.} We next briefly discuss some related work on SGD, weighted distributions, and batching methods.  We then combine these ideas into one cohesive framework and discuss the benefits in various settings.  Section \ref{sec:weights} focuses on the impact of weighting the distribution.  In Section \ref{sec:smooth} we analyze SGD with weighting and batches for smooth objective functions, considering the least squares objective as a motivating example.  We analyze the non-smooth case along with the hinge loss objective function in Section \ref{sec:non-smooth}.  We display experimental results for the least squares problem in Section \ref{sec:exps} that serve to highlight the relative tradeoffs of using both batches and weighting, along with different computational approaches.  We conclude in Section \ref{sec:conc}.

{\bfseries Related work.}  
Stochastic gradient descent, stemming from the work \cite{robmon}, has recently received renewed attention for its effectiveness in treating large-scale problems arising in machine learning \cite{bottou, bottou2010large, njls09, shalev2008svm}.  Importance sampling in stochastic gradient descent, as in the case of mini-batching (which we also refer to simply as \textit{batching} here), also leads to variance reduction in stochastic gradient methods and, in terms of theory, leads to improvement of the leading constant in the complexity estimate, typically via replacing  the maximum of certain data-dependent quantities by their average.  Such theoretical guarantees were shown for the case of solving least squares problems where stochastic gradient descent coincides with the randomized Kaczmarz method in \cite{SV09:Randomized-Kaczmarz}.  This method was extended to handle noisy linear systems in \cite{Nee10:Randomized-Kaczmarz}.  Later, this strategy  was extended to the more general setting of  smooth and strongly convex objectives in \cite{needell2014stochastic}, building on an analysis of stochastic gradient descent in \cite{bach2011}.   Later, \cite{zhao2014stochastic} considered a similar importance sampling strategy for convex but not necessarily smooth objective functions.  Importance sampling has also been considered in the related setting of stochastic coordinate descent/ascent methods \cite{nesterov2012efficiency, richtak14, qrz15, cqr15}.  Other papers exploring advantages of importance sampling in various adaptations of stochastic gradient descent include but are not limited to \cite{lee2013efficient, schmidt2013minimizing, xiao2014proximal, defossez2015averaged}.

Mini-batching in stochastic gradient methods refers to pooling together several random examples in the estimate of the gradient, as opposed to just a single random example at a time, effectively reducing the variance of each iteration \cite{shalev2011pegasos}.   On the other hand, each iteration also increases in complexity as the size of the batch grows.  However, if parallel processing is available, the computation can be done concurrently at each step, so that the ``per-iteration cost" with batching is not higher than without batching.  Ideally, one would like the consequence of using batch size $b$ to result in a convergence rate speedup by factor of $b$, but this is not always the case \cite{byrd2012sample}.  Still, \cite{takavc2013mini} showed that by incorporating parallelization or multiple cores, this strategy can only improve on the convergence rate over standard stochastic gradient, and can improve the convergence rate by a factor of the batch size in certain situations, such as when the matrix has nearly orthonormal rows.  Other recent papers exploring the advantages of mini-batching in different settings of stochastic optimization include \cite{cotter2011better, dekel2012optimal, NW12:Two-Subspace-Projection, konevcny2014ms2gd, li2014efficient}. 

The recent paper \cite{csiba2016importance} also considered the combination of importance sampling and mini-batching for a stochastic dual coordinate ascent algorithm in the general setting of empirical risk minimization, wherein the function to minimize is smooth and convex.  There the authors provide a theoretical optimal sampling strategy that is not practical to implement but can be approximated via alternating minimization.  They also provide a computationally efficient formula that yields better sample complexity than uniform mini-batching, but without quantitative bounds on the gain.   In particular, they do not provide general assumptions under which one achieves provable speedup in convergence depending on an average Lipschitz constant rather than a maximum. 

For an overview of applications of stochastic gradient descent and its weighted/batched variants in large-scale matrix inversion problems, we refer the reader to \cite{gower2016randomized}. 

\section{SGD with weighting}\label{sec:weights}

Recall the objective function \eqref{obj}.  We assume in this section that the function $F$ and the functionals $f_i$ satisfy the following convexity and smoothness conditions:

\begin{center}
\emph{Convexity and smoothness conditions}
\end{center}
\begin{enumerate}
\item Each $f_i$ is continuously differentiable and the gradient function $\nabla f_i$ has Lipschitz 
constant bounded by $L_i$: $\| \nabla f_i(\bx) - \nabla f_i(\by) \|_2 \leq L_i \| \bx - \by \|_2$ for all vectors $\bx$ and $\by$.
\item $F$ has strong convexity parameter $\mu$; that is, $\langle \bx - \by, \nabla F(\bx) - \nabla F(\by) \rangle \geq \mu \| \bx - \by \|_2^2$ for all vectors $\bx$ and $\by$.
\item At the unique minimizer $\bx_{*} = \argmin F(\bx)$, the average gradient norm squared $\| \nabla f_i(\bx_{*}) \|_2^2$ is not too large, in the sense that 
$$
\frac{1}{n} \sum_{i=1}^n \| \nabla f_i(\bx_{*}) \|_2^2 \leq \sigma^2.
$$
\end{enumerate}

An unbiased gradient estimate for $F(\bx)$ can be obtained by drawing $i$ uniformly from $[n] \defby \{1, 2, \ldots, n\}$ and using $\nabla f_i(\bx)$ as the estimate for $\nabla F(\bx)$.  The standard SGD update with fixed step size $\gamma$ is given by
\begin{equation}
\label{sgd_update}
\bx_{k+1} \leftarrow \bx_{k} - \gamma \nabla f_{i_k}(\bx_k)
\end{equation}
where each $i_k$ is drawn uniformly from $[n]$.  The idea behind weighted sampling is that, by drawing $i$ from a weighted distribution 
$\cD^{(p)} = \{p(1), p(2), \dots, p(n)\}$ over $[n]$, the weighted sample $\frac{1}{p(i_k)} \nabla f_{i_k}(\bx_k)$ is still an unbiased estimate of the gradient $\nabla F(\bx)$.  This motivates the weighted SGD update 
\begin{equation}
\label{sgd_update_w}
\bx_{k+1} \leftarrow \bx_{k} - \frac{\gamma}{n p(i_k)} \nabla f_{i_k}(\bx_k),
\end{equation}
In \cite{needell2014stochastic}, a family of distributions $\cD^{(p)}$ whereby functions $f_i$ with larger Lipschitz constants are more likely to be sampled was shown to lead to an improved convergence rate in SGD over uniform sampling.  In terms of the  distance $\| \bx_k - \bx_* \|_2^2$ of the $k$th iterate to the unique minimum, starting from initial distance $\varepsilon_0 = \| \bx_0 - \bx_{*} \|_2^2$, Corollary 3.1 in \cite{needell2014stochastic} is as follows.

\begin{proposition}
\label{prop:main}
Assume the convexity and smoothness conditions are in force.  For any desired $\varepsilon > 0$, and using a stepsize of 
$$
\gamma = \frac{\mu \varepsilon}{4 (\varepsilon \mu \frac{1}{n} \sum_{i=1}^n L_i + \sigma^2)},
$$
we have that after
\begin{equation}
\label{k_num}
k = \left\lceil4 \log(2 \varepsilon_0/\varepsilon) \left( \frac{ \frac{1}{n} \sum_{i=1}^n L_i }{\mu} + \frac{\sigma^2}{\mu^2 \varepsilon}  \right)\right\rceil
\end{equation}
iterations of weighted SGD \eqref{sgd_update_w} with weights 
\begin{equation}
\label{w:weights}
p(i) = \frac{1}{2n} + \frac{1}{2n} \cdot \frac{L_i}{\frac{1}{n} \sum_i L_i},
\end{equation}
the following holds in expectation with respect to the weighted distribution \eqref{w:weights}: $\mathbb{E}^{(p)} \| {\bf x}_k - {\bf x}_{*} \|_2^2 \leq \varepsilon$.
\end{proposition}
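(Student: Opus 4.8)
The plan is to run the standard one‑step contraction argument for SGD, with the weights \eqref{w:weights} entering only in the estimate of the second moment of the reweighted stochastic gradient. Write $\bar L := \frac1n\sum_{i=1}^n L_i$ and abbreviate $\bg_k := \frac{1}{n p(i_k)}\nabla f_{i_k}(\bx_k)$, so that $\mathbb{E}^{(p)}[\bg_k \mid \bx_k] = \sum_i p(i)\frac{1}{np(i)}\nabla f_i(\bx_k) = \nabla F(\bx_k)$, i.e.\ \eqref{sgd_update_w} moves along an unbiased gradient. Expanding the square in \eqref{sgd_update_w} and taking the conditional expectation over $i_k \sim \cD^{(p)}$ gives
\[
\mathbb{E}^{(p)}\big[\|\bx_{k+1}-\bx_{*}\|_2^2 \,\big|\, \bx_k\big] = \|\bx_k-\bx_{*}\|_2^2 \;-\; 2\gamma\,\langle \nabla F(\bx_k),\, \bx_k-\bx_{*}\rangle \;+\; \gamma^2\,\mathbb{E}^{(p)}\big[\|\bg_k\|_2^2 \,\big|\, \bx_k\big].
\]
The middle term is handled by the hypotheses on $F$ alone: strong convexity together with $\nabla F(\bx_{*}) = \bzero$ gives $\langle\nabla F(\bx_k),\bx_k-\bx_{*}\rangle \geq \mu\|\bx_k-\bx_{*}\|_2^2$, and plain convexity gives $F(\bx_k) - F(\bx_{*}) \leq \langle\nabla F(\bx_k),\bx_k-\bx_{*}\rangle$; the latter will be used to absorb part of the third term.

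The crux is bounding $\mathbb{E}^{(p)}[\|\bg_k\|_2^2 \mid \bx_k] = \frac1n\sum_{i=1}^n \frac{1}{np(i)}\|\nabla f_i(\bx_k)\|_2^2$. Using the smoothness‑plus‑convexity inequality $\|\nabla f_i(\bx_k)-\nabla f_i(\bx_{*})\|_2^2 \leq 2L_i\big(f_i(\bx_k)-f_i(\bx_{*})-\langle\nabla f_i(\bx_{*}),\bx_k-\bx_{*}\rangle\big)$ together with $\|\nabla f_i(\bx_k)\|_2^2 \leq 2\|\nabla f_i(\bx_k)-\nabla f_i(\bx_{*})\|_2^2 + 2\|\nabla f_i(\bx_{*})\|_2^2$, the whole sum is controlled once we control the ratios $\frac{L_i}{np(i)}$ and $\frac{1}{np(i)}$. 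This is exactly where \eqref{w:weights} pays off: since $np(i) = \tfrac12 + \tfrac12\,L_i/\bar L \geq \tfrac12$ and $np(i) \geq \tfrac12\,L_i/\bar L$, one has $\frac{1}{np(i)} \leq 2$ and $\frac{L_i}{np(i)} \leq 2\bar L$ for every $i$, so the $\max_i L_i$ that a naive (uniform‑sampling) bound would produce is replaced by the \emph{average} $\bar L$. Summing and using $\frac1n\sum_i(f_i(\bx_k)-f_i(\bx_{*})) = F(\bx_k)-F(\bx_{*})$, $\frac1n\sum_i\langle\nabla f_i(\bx_{*}),\bx_k-\bx_{*}\rangle = \langle\nabla F(\bx_{*}),\bx_k-\bx_{*}\rangle = 0$, and $\frac1n\sum_i\|\nabla f_i(\bx_{*})\|_2^2 \leq \sigma^2$ then yields a bound of the shape $\mathbb{E}^{(p)}[\|\bg_k\|_2^2\mid\bx_k] \leq c_1\,\bar L\,(F(\bx_k)-F(\bx_{*})) + c_2\,\sigma^2$ for absolute constants $c_1,c_2$. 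I expect this to be the main obstacle: one must verify that the reweighting simultaneously shrinks the $\bar L$ coefficient and keeps the $\sigma^2$ coefficient bounded, and the step tacitly uses convexity of each $f_i$ (which I take to be intended — with only $L_i$‑Lipschitz gradients one recovers $\max_i L_i$ in place of $\bar L$, and the prescribed stepsize would then have to be smaller).

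Finally I would assemble and iterate. Substituting the two ingredients into the one‑step identity and using $F(\bx_k)-F(\bx_{*}) \leq \langle\nabla F(\bx_k),\bx_k-\bx_{*}\rangle$ gives
\[
\mathbb{E}^{(p)}\big[\|\bx_{k+1}-\bx_{*}\|_2^2 \,\big|\, \bx_k\big] \leq \|\bx_k-\bx_{*}\|_2^2 \;-\; 2\gamma\big(1 - O(\gamma\bar L)\big)\langle\nabla F(\bx_k),\bx_k-\bx_{*}\rangle \;+\; O(\gamma^2\sigma^2).
\]
The prescribed stepsize obeys $\gamma = \frac{\mu\varepsilon}{4(\varepsilon\mu\bar L+\sigma^2)} \leq \frac{1}{4\bar L}$, so $1 - O(\gamma\bar L)$ is bounded below by an absolute constant; applying $\langle\nabla F(\bx_k),\bx_k-\bx_{*}\rangle \geq \mu\|\bx_k-\bx_{*}\|_2^2$ (and keeping track of the absolute constants, which work out so as to give exactly the stepsize and iteration count in the statement) yields the contraction $\mathbb{E}^{(p)}[\|\bx_{k+1}-\bx_{*}\|_2^2\mid\bx_k] \leq (1-\gamma\mu)\|\bx_k-\bx_{*}\|_2^2 + c\,\gamma^2\sigma^2$. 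Taking total expectations and unrolling the geometric recursion gives $\mathbb{E}^{(p)}\|\bx_k-\bx_{*}\|_2^2 \leq (1-\gamma\mu)^k\varepsilon_0 + \frac{c\,\gamma\sigma^2}{\mu}$. Plugging in $\gamma$ makes the steady‑state term $\frac{c\,\gamma\sigma^2}{\mu} \leq \varepsilon/2$; and since $\frac{1}{\gamma\mu} = 4\big(\frac{\bar L}{\mu} + \frac{\sigma^2}{\mu^2\varepsilon}\big)$, the elementary bound $(1-\gamma\mu)^k \leq e^{-k\gamma\mu}$ shows that the choice \eqref{k_num} forces $(1-\gamma\mu)^k\varepsilon_0 \leq \varepsilon/2$. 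Adding the two halves gives $\mathbb{E}^{(p)}\|\bx_k-\bx_{*}\|_2^2 \leq \varepsilon$, as claimed.
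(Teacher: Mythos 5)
First, note that the paper itself gives no proof of Proposition \ref{prop:main}: it is quoted as Corollary 3.1 of \cite{needell2014stochastic}, so the only ``proof'' in this paper is the citation. Your reconstruction follows essentially the same route as that source: expand the square in the update \eqref{sgd_update_w}, use unbiasedness for the cross term, bound the second moment of the reweighted gradient via co-coercivity of each convex $L_i$-smooth $f_i$ (your inequality $\|\nabla f_i(\bx)-\nabla f_i(\bx_{*})\|_2^2\le 2L_i\left(f_i(\bx)-f_i(\bx_{*})-\langle\nabla f_i(\bx_{*}),\bx-\bx_{*}\rangle\right)$ is an equivalent variant of the co-coercivity bound used there), observe that the weights \eqref{w:weights} give $1/(np(i))\le 2$ and $L_i/(np(i))\le 2\overline{L}$ so that the maximum Lipschitz constant is replaced by the average, and unroll the resulting contraction. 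You are also right that convexity of each individual $f_i$ is tacitly required for this step (the listed ``convexity and smoothness conditions'' only assert strong convexity of $F$); this is indeed an explicit hypothesis in \cite{needell2014stochastic}.

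The one genuine issue is quantitative, in your final assembly. After weakening the contraction factor $1-2\gamma\mu(1-2\gamma\overline{L})$ to $1-\gamma\mu$ and then unrolling, your noise floor becomes $4\gamma\sigma^2/\mu=\varepsilon\sigma^2/(\varepsilon\mu\overline{L}+\sigma^2)$, which is only $\le\varepsilon$, not $\le\varepsilon/2$ as you claim (take $\sigma^2\gg\varepsilon\mu\overline{L}$); added to the $\varepsilon/2$ transient this yields $3\varepsilon/2$. The fix is to sum the geometric series with the exact factor: the recursion $E_{k+1}\le rE_k+2\gamma^2\sigma^2_{(p)}$ with $r=1-2\gamma\mu(1-2\gamma\overline{L})$ and $\sigma^2_{(p)}\le2\sigma^2$ gives the floor $2\gamma\sigma^2/\left(\mu(1-2\gamma\overline{L})\right)$, and with the prescribed stepsize one computes $1-2\gamma\overline{L}=(\varepsilon\mu\overline{L}+2\sigma^2)/\left(2(\varepsilon\mu\overline{L}+\sigma^2)\right)$, so the floor equals $\varepsilon\sigma^2/(\varepsilon\mu\overline{L}+2\sigma^2)\le\varepsilon/2$ exactly as needed, while the crude bound $r\le1-\gamma\mu$ is kept only for the transient term. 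With that correction your argument delivers the stated $\gamma$, $k$, and error $\varepsilon$.
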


\begin{remark}
Note that one can obtain a gaurantee with the same number of iterations as \eqref{k_num}, same weights \eqref{w:weights}, and step-sizes which depend on the index selected by cleverly re-writing the objective function as the sum of scaled functionals $f_i$ each repeated an appropriate number of times. We state the version in Proposition \ref{prop:main} derived from \cite{needell2014stochastic} here for simplicity and convenience, and note that it improves upon classical results even in the uniformly bounded Lipschitz case.
\end{remark}

\begin{remark}
This should be compared to the result for uniform sampling SGD \cite{needell2014stochastic}: using step-size 
$\gamma =  \frac{\mu \varepsilon}{4 (\varepsilon \mu (\sup_i L_i)+ \sigma^2)}$,
one obtains the comparable error guarantee $\mathbb{E} \| {\bf x}_k - {\bf x}_{*} \|_2^2 \leq \varepsilon$  after a number of iterations
\begin{equation}
\label{k_uniform}
k = \left\lceil2 \log(2 \varepsilon_0/\varepsilon) \left( \frac{ \sup_i L_i }{\mu} + \frac{\sigma^2}{\mu^2 \varepsilon}  \right)\right\rceil.
\end{equation}
Since the average Lipschitz constant ${\frac{1}{n} \sum_i L_i}$ is always at most $\sup_i L_i$, and can be up to $n$ times smaller than $\sup_i L_i$, SGD with weighted sampling requires twice the number of iterations of uniform SGD in the worst case, but can potentially converge much faster, specifically, in the regime where
$$
\frac{\sigma^2}{\mu^2 \varepsilon} \leq \frac{ \frac{1}{n} \sum_{i=1}^n L_i }{\mu} \ll  \frac{ \sup_i L_i }{\mu}. 
$$
\end{remark}

\section{Mini-batch SGD with weighting: the smooth case}\label{sec:smooth}

Here we present a weighting and mini-batch scheme for SGD based on Proposition \ref{prop:main}.  For practical purposes, we assume that the functions $f_i(\bx)$ such that $F(\bx) = \frac{1}{n}\sum_{i=1}^n f_i(\bx)$ are initially partitioned into fixed batches of size $b$ and denote the partition by $\{\tau_1, \tau_2, \ldots \tau_d\}$ where $|\tau_i| = b$ for all $i < d$ and $d = \lceil n/b\rceil$ (for simplicity we will henceforth assume that $d = n/b$ is an integer).  We will randomly select from this pre-determined partition of batches; however, our analysis extends easily to the case where a batch of size $b$ is randomly selected each time from the entire set of functionals.  With this notation, we may re-formulate the objective given in \eqref{obj} as follows:
$$
F(\bx) = \frac{1}{d}\sum_{i=1}^d g_{\tau_i}(\bx) = \mathbb{E} g_{\tau_i}(\bx),
$$
where now we write $g_{\tau_i}(\bx) = \frac{1}{b}\sum_{j\in\tau_i} f_j(\bx)$.  We can apply Proposition \ref{prop:main} to the functionals $g_{\tau_i}$, and select batch $\tau_i$ with probability proportional the Lipschitz constant of $\nabla g_{\tau_i}$ (or of $g_{\tau_i}$ in the non-smooth case, see Section \ref{sec:non-smooth}).   Note that 
\begin{itemize}
\item The strong convexity parameter $\mu$ for the function $F$ remains invariant to the batching rule.
\item  The residual error $\sigma^2_{\tau}$ such that 
$ \frac{1}{d} \sum_{i=1}^d \| \nabla g_{\tau_i} (\bx_{*}) \|_2^2 \leq \sigma^2_{\tau}$ can only {\bf decrease} with increasing batch size, since for $b\geq 2$,
\begin{equation}
\label{sigma:down}
\sigma^2_{\tau} = \frac{1}{d} \sum_{i=1}^d \left\| \frac{1}{b} \nabla \left( \sum_{k \in\tau_i} f_k(\bx_{*}) \right) \right\|_2^2 \leq  \frac{1}{n}\sum_{i=1}^n  \| \nabla f_i(\bx_{*}) \|_2^2 \leq \sigma^2.  \nonumber \\
\end{equation}
Note that for certain objective functions, this bound can be refined with a dependence on the block size $b$, in which case even further improvements can be gained by batching, see e.g. \eqref{ortho} and surrounding discussions. 
\item The average Lipschitz constant $\overline{L}_{\tau} = \frac{1}{d} \sum_{i=1}^d L_{\tau_i}$ of the gradients of the batched functions $g_{\tau_i}$ can only {\bf decrease} with increasing batch size, since by the triangle inequality, $L_{\tau_i} \leq \frac{1}{b} \sum_{k \in \tau_i} L_k,$ and thus
$$
\frac{1}{d} \sum_{i=1}^d L_{\tau_i}  \leq \frac{1}{n} \sum_{k=1}^n L_k = \overline{L}.
$$
\end{itemize}
Incorporating these observations, applying Proposition \ref{prop:main} in the batched weighted setting implies that incorporating weighted sampling and mini-batching in SGD results in a convergence rate that equals or improves on the rate obtained using weights alone:

\begin{theorem}
\label{thm:batchweight}
Assume that the convexity and smoothness conditions on $F(\bx) = \frac{1}{n}\sum_{i=1}^n f_i(\bx)$ are in force.  Consider the $d = n/b$ batches $g_{\tau_i}(\bx) = \frac{1}{b}\sum_{k \in\tau_i} f_k(\bx)$, and the batched weighted SGD iteration 
$$
\bx_{k+1} \leftarrow \bx_{k} - \frac{\gamma}{d \cdot p(\tau_{i_k})} \nabla g_{\tau_{i_k}}(\bx_k)
$$
where batch $\tau_i$ is selected at iteration $k$ with probability
\begin{equation}\label{newweights}
p(\tau_i) = \frac{1}{2d} + \frac{1}{2d} \cdot \frac{L_{\tau_i}}{\overline{L}_{\tau}}.
\end{equation}  
For any desired $\varepsilon$, and using a stepsize of 
$$
\gamma = \frac{\mu \varepsilon}{4 (\varepsilon \mu \overline{L}_{\tau}+ \sigma_{\tau}^2)},
$$
we have that after a number of iterations
\begin{equation}\label{kwoot}
k = \left\lceil4 \log(2 \varepsilon_0/\varepsilon) \left( \frac{ \overline{L}_{\tau} }{\mu} + \frac{\sigma_{\tau}^2}{\mu^2 \varepsilon}  \right)\right\rceil \leq \left\lceil4 \log(2 \varepsilon_0/\varepsilon) \left( \frac{ \overline{L} }{\mu} + \frac{\sigma^2}{\mu^2 \varepsilon}  \right)\right\rceil,
\end{equation}
the following holds in expectation with respect to the weighted distribution \eqref{newweights}: $\mathbb{E}^{(p)} \| {\bf x}_k - {\bf x}_{*} \|_2^2 \leq \varepsilon$.
Since 
\end{theorem}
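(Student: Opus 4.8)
The plan is to derive the theorem directly from Proposition \ref{prop:main} by treating the $d$ batched functionals $g_{\tau_i}$ as the new collection of functionals to which weighted SGD is applied. First I would verify that the convexity and smoothness conditions transfer from $\{f_i\}$ to $\{g_{\tau_i}\}$: each $g_{\tau_i} = \frac{1}{b}\sum_{k\in\tau_i} f_k$ is continuously differentiable with $\nabla g_{\tau_i} = \frac{1}{b}\sum_{k\in\tau_i}\nabla f_k$, and by the triangle inequality its gradient is Lipschitz with constant $L_{\tau_i} \leq \frac{1}{b}\sum_{k\in\tau_i}L_k$; the function $F = \frac{1}{d}\sum_i g_{\tau_i}$ is literally unchanged by the batching, so its strong convexity parameter is still $\mu$ and $\bx_*$ remains its unique minimizer; and the residual bound $\frac{1}{d}\sum_i\|\nabla g_{\tau_i}(\bx_*)\|_2^2 \leq \sigma_\tau^2$ holds by the very definition of $\sigma_\tau^2$.

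Next I would apply Proposition \ref{prop:main} verbatim to the system $F(\bx) = \frac{1}{d}\sum_{i=1}^d g_{\tau_i}(\bx)$, substituting the parameters $(\mu, \overline{L}_\tau, \sigma_\tau^2)$ for $(\mu, \overline{L}, \sigma^2)$ and $d$ for $n$. This immediately produces the stated weighted update rule with weights \eqref{newweights}, the stated stepsize, and the guarantee $\mathbb{E}^{(p)}\|\bx_k - \bx_*\|_2^2 \leq \varepsilon$ after $k = \lceil 4\log(2\varepsilon_0/\varepsilon)(\overline{L}_\tau/\mu + \sigma_\tau^2/(\mu^2\varepsilon))\rceil$ iterations, which is the left-hand side of \eqref{kwoot}.

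Finally, to obtain the right-hand inequality in \eqref{kwoot}, I would invoke the two monotonicity facts recorded in the bulleted remarks preceding the theorem: $\sigma_\tau^2 \leq \sigma^2$, which follows from convexity of the squared norm as in \eqref{sigma:down}, and $\overline{L}_\tau = \frac{1}{d}\sum_i L_{\tau_i} \leq \frac{1}{n}\sum_k L_k = \overline{L}$, which follows from the triangle-inequality bound $L_{\tau_i} \leq \frac{1}{b}\sum_{k\in\tau_i}L_k$. Since $\log(2\varepsilon_0/\varepsilon)$, $\mu$, and $\varepsilon$ are unaffected by batching, monotonicity of the map $t\mapsto\lceil t\rceil$ yields the claimed comparison. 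There is essentially no obstacle here: the entire content of the argument is the reduction to Proposition \ref{prop:main} together with the two inequalities above, and the only point requiring a moment's care is confirming that the rewritten objective is the same function $F$, so that $\mu$ and $\bx_*$ are genuinely preserved.
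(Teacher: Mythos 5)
Your proposal is correct and follows essentially the same route as the paper: the paper proves Theorem \ref{thm:batchweight} exactly by applying Proposition \ref{prop:main} to the batched functionals $g_{\tau_i}$ and invoking the three bulleted observations (invariance of $\mu$, $\sigma_\tau^2 \leq \sigma^2$, and $\overline{L}_\tau \leq \overline{L}$ via the triangle inequality) to obtain the inequality in \eqref{kwoot}. No substantive differences.
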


\begin{remark}
The inequality in \eqref{kwoot} implies that batching and weighting can only improve the convergence rate of SGD compared to weighting alone. As a reminder, this is under the assumption that the batches can be computed in parallel, so depending on the number of cores available, one needs to weigh the computational tradeoff between iteration complexity and improved convergence rate.  We investigate this tradeoff as well as other computational issues in the following sections.
\end{remark}

To completely justify the strategy of batching + weighting, we must also take into account the precomputation cost in computing the weighted distribution \eqref{newweights}, which increases with the batch size $b$.   In the next section, we refine Theorem \ref{thm:batchweight} precisely this way in the case of the least squares objective, where we can quantify more precisely the gain achieved by weighting and batching.  We give several explicit bounds and sampling strategies on the Lipschitz constants in this case that can be used for computationally efficient sampling.

\subsection{Least Squares Objective}

Although there are of course many methods for solving linear systems, methods like SGD for least squares problems have attracted recent attention due to their ability to utilize small memory footprints even for very large systems. In settings for example where the matrix is too large to be stored in memory, iterative approaches like the Kaczmarz method (a variant of SGD) are necessary.  With this motivation, we spend this section analyzing the least squares problem using weights and batching.

Consider the least squares objective 
$$
F(\bx) = \frac{1}{2} \| \A \bx - \bb \|_2^2 = \frac{1}{n} \sum_{i=1}^n f_i(\bx),
$$
where $f_i(\bx) =  \frac{n}{2} ( b_i - \langle \ba_i, \bx \rangle )^2$.  We assume the matrix $\A$ has full column-rank, so that there is a unique minimizer $\bx_{*}$ to the least squares problem:
$$
\xls = \bx_{*} = \arg \min_{\bx}  \| \A \bx - \bb \|_2^2.
$$
Note that the convexity and smoothness conditions are satisfied for such functions.  Indeed, observe that $\nabla f_i(\bx) = n (\langle \ba_i, \bx\rangle - b_i )\ba_i$, and
\begin{enumerate}
\item The individual Lipschitz constants are bounded by $L_i = n \| \ba_i \|_2^2$, and the average Lipschitz constant by 
$\frac{1}{n} \sum_i L_i = \| \bA \|_F^2$ (where $\|\cdot\|_F$ denotes the Frobenius norm),
\item The strong convexity parameter is $\mu = \sigma_{\min}^{-1}(\bA)$,  the reciprocal of the smallest singular value of $\bA$,
\item The residual is $\sigma^2 = n \sum_i \| \ba_i \|_2^2 | \langle \ba_i, \bx_{*} \rangle - b_i |^2$.
\end{enumerate}

In the batched setting, we compute
\begin{align}
g_{\tau_i}(\bx) &= \frac{1}{b}\sum_{k\in\tau_i} f_k(\bx) = \frac{n}{2b}\sum_{k\in\tau_i} (b_k - \langle\ba_k, \bx\rangle)^2 = \frac{d}{2} \| \bA_{\tau_i} \bx - \bb_{\tau_i} \|_2^2,
\end{align}
where we have written $\bA_{\tau_i}$ to denote the submatrix of $\bA$ consisting of the rows indexed by $\tau_i$.

Denote by $\sigma_{\tau}^2$ the residual in the batched setting. 
Since $\nabla g_{\tau_i}(\bx) = d \sum_{k \in \tau_i} (\langle\ba_k, \bx\rangle - b_k )\ba_k$, 
\begin{align}
\label{sigma:linear}
\sigma^2_{\tau} &= \frac{1}{d} \sum_{i=1}^d \| \nabla g_{\tau_i}(\bx_{*}) \|_2^2 = d \sum_{i=1}^d \| \sum_{k\in\tau_i} (\langle\ba_k, \bx_{*}\rangle - b_k)\ba_k \|_2^2 \nonumber \\
&= d \sum_{i=1}^d \| \bA^{*}_{\tau_i} ( \bA_{\tau_i} \bx_{*} - \bb_{\tau_i}) \|_2^2 \leq d  \sum_{i=1}^d  \| \bA_{\tau_i} \|^2 \| \bA_{\tau_i} \bx_{*} - \bb_{\tau_i} \|_2^2, \nonumber
\end{align}
where we have written $\|\bB\|$ to denote the spectral norm of the matrix $\bB$, and $\bB^*$ the adjoint of the matrix.  
Denote by $L_{\tau_i}$ the Lipschitz constant of $\nabla g_{\tau_i} $. 
Then we also have
\begin{align*}
L_{\tau_i} &= \sup_{\bx, \by} \frac{\|\nabla g_{\tau_i}(\bx) - \nabla g_{\tau_i}(\by)\|_2}{\|\bx-\by\|_2}\\
&= \frac{n}{b} \sup_{\bx, \by} \frac{\|\sum_{k\in\tau_i}\left[ (\langle\ba_k, \bx\rangle - b_k)\ba_k  - (\langle\ba_k, \by\rangle - b_k)\ba_k \right]\|_2}{\|\bx-\by\|_2}\\
&= \frac{n}{b} \sup_{\bz}\frac{\|\sum_{k\in\tau_i}\langle\ba_k, \bz\rangle\ba_k  \|_2}{\|\bz\|_2}\\
&= \frac{n}{b} \sup_{\bz}\frac{\|\bA_{\tau_i}^*\bA_{\tau_i}\bz  \|_2}{\|\bz\|_2}\\
&= \frac{n}{b} \|\bA_{\tau_i}^*\bA_{\tau_i}\| \\
&= d \| \bA_{\tau_i} \|^2.
\end{align*}

We see thus that \emph{if there exists a partition such that $\| \bA_{\tau_i} \|$ are as small as possible (e.g. within a constant factor of the row norms) for all $\tau_i$ in the partition, then both $\sigma^2_{\tau}$ and $L_{\tau} = \frac{1}{d} \sum_i L_{\tau_i}$ are decreased by a factor of the batch size $b$ compared to the unbatched setting.}  These observations are summed up in the following corollary of Theorem \ref{thm:batchweight} for the least squares case. 

\begin{corollary}
\label{LS:batchweight}
Consider $F(\bx) = \frac{1}{2} \| \A \bx - \bb \|_2^2 = \frac{1}{2} \sum_{i=1}^{d} \| \bA_{\tau_i} \bx - \bb_{\tau_i} \|_2^2$.  Consider the batched weighted SGD iteration 
\begin{equation}
\label{SGD:LS}
\bx_{k+1} \leftarrow \bx_{k} - \frac{\gamma}{p(\tau_i)} \sum_{j \in \tau_i} ( \langle\ba_j, \bx_k\rangle - b_j)\ba_j,
\end{equation}
with weights 
\begin{equation}
\label{w:batchweights}
p(\tau_i) = \frac{b}{2n} + \frac{1}{2} \cdot \frac{\| \bA_{\tau_i} \|^2}{ \sum_{i=1}^{d} \| \bA_{\tau_i} \|^2 }.
\end{equation}
For any desired $\varepsilon$, and using a stepsize of  
\begin{equation}\label{LSstep}
\gamma = \frac{ \frac{1}{4}\varepsilon}{\varepsilon \sum_{i=1}^d \| \bA_{\tau_i} \|^2 + d \sigma_{\min}^{-2}(\bA)  \sum_{i=1}^d  \| \bA_{\tau_i} \|^2 \| \bA_{\tau_i} \bx_{*} - \bb_{\tau_i} \|_2^2},
\end{equation}
we have that after
\begin{equation}
\label{k:batchweight}
k = \left\lceil4 \log(2 \varepsilon_0/\varepsilon) \left(  \sigma_{\min}^{-2}(\bA) \sum_{i=1}^d \| \bA_{\tau_i} \|^2  + \frac{d \sigma_{\min}^{-4}(\bA)  \sum_{i=1}^d  \| \bA_{\tau_i} \|^2 \| \bA_{\tau_i} \bx_{*} - \bb_{\tau_i} \|_2^2}{ \varepsilon}  \right)\right\rceil
\end{equation}
iterations of \eqref{SGD:LS}, $\mathbb{E}^{(p)} \| {\bf x}_k - {\bf x}_{*} \|_2^2 \leq \varepsilon$ where $\mathbb{E}^{(p)}[\cdot ]$ means the expectation with respect to the index at each iteration drawn according to the weighted distribution \eqref{w:batchweights}. 
\end{corollary}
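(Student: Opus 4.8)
The plan is to obtain Corollary \ref{LS:batchweight} as a direct specialization of Theorem \ref{thm:batchweight} to the least squares functionals $f_i(\bx) = \frac{n}{2}(b_i - \langle\ba_i,\bx\rangle)^2$ (so that $\frac1n\sum_i f_i = \frac12\|\bA\bx - \bb\|_2^2$), feeding in the closed-form expressions for the batched quantities that were computed in the paragraphs immediately preceding the statement. First I would record that $F$ and the $f_i$ meet the convexity and smoothness conditions, with strong convexity constant $\mu = \sigma_{\min}^2(\bA)$ (the smallest eigenvalue of the Hessian $\bA^*\bA$, positive since $\bA$ has full column rank). For the fixed partition $\{\tau_1,\dots,\tau_d\}$ with $d = n/b$ one has $g_{\tau_i}(\bx) = \frac1b\sum_{k\in\tau_i} f_k(\bx) = \frac{d}{2}\|\bA_{\tau_i}\bx - \bb_{\tau_i}\|_2^2$, whence $\nabla g_{\tau_i}(\bx) = d\,\bA_{\tau_i}^*(\bA_{\tau_i}\bx - \bb_{\tau_i}) = d\sum_{j\in\tau_i}(\langle\ba_j,\bx\rangle - b_j)\ba_j$; substituting this into the iteration $\bx_{k+1}\leftarrow\bx_k - \frac{\gamma}{d\,p(\tau_{i_k})}\nabla g_{\tau_{i_k}}(\bx_k)$ of Theorem \ref{thm:batchweight}, the two factors of $d$ cancel and we recover exactly the update \eqref{SGD:LS}.

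Next I would substitute the batched Lipschitz and residual quantities already derived above the corollary. That computation gives $L_{\tau_i} = d\|\bA_{\tau_i}\|^2$, hence $\overline{L}_\tau = \frac1d\sum_{i=1}^d L_{\tau_i} = \sum_{i=1}^d\|\bA_{\tau_i}\|^2$, so the weight formula \eqref{newweights} becomes $p(\tau_i) = \frac1{2d} + \frac1{2d}\cdot\frac{d\|\bA_{\tau_i}\|^2}{\sum_i\|\bA_{\tau_i}\|^2} = \frac{b}{2n} + \frac12\cdot\frac{\|\bA_{\tau_i}\|^2}{\sum_i\|\bA_{\tau_i}\|^2}$, i.e. \eqref{w:batchweights}. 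For the residual, \eqref{sigma:linear} gives $\frac1d\sum_i\|\nabla g_{\tau_i}(\bx_*)\|_2^2 = d\sum_i\|\bA_{\tau_i}^*(\bA_{\tau_i}\bx_* - \bb_{\tau_i})\|_2^2 \le d\sum_i\|\bA_{\tau_i}\|^2\|\bA_{\tau_i}\bx_* - \bb_{\tau_i}\|_2^2 =: \sigma_\tau^2$; here I would note that Theorem \ref{thm:batchweight} (equivalently Proposition \ref{prop:main}) remains valid verbatim if the residual is replaced by any upper bound, since in the underlying error recursion a larger residual only forces a smaller step size and a longer iteration count, both of which are what the conclusion reports. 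Plugging $\mu = \sigma_{\min}^2(\bA)$, $\overline{L}_\tau = \sum_i\|\bA_{\tau_i}\|^2$ and this $\sigma_\tau^2$ into $\gamma = \frac{\mu\varepsilon}{4(\varepsilon\mu\overline{L}_\tau + \sigma_\tau^2)}$ and into the iteration count \eqref{kwoot}, then dividing numerator and denominator by $\mu$ so that $\sigma_\tau^2/\mu$ produces the factor $\sigma_{\min}^{-2}(\bA)$ and $\sigma_\tau^2/(\mu^2\varepsilon)$ the factor $\sigma_{\min}^{-4}(\bA)$, reproduces precisely the step size \eqref{LSstep} and the bound \eqref{k:batchweight}, with the guarantee $\mathbb{E}^{(p)}\|\bx_k - \bx_*\|_2^2\le\varepsilon$ inherited from Theorem \ref{thm:batchweight}.

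There is no substantive obstacle here: the content is entirely in Theorem \ref{thm:batchweight} together with the explicit formulas $L_{\tau_i} = d\|\bA_{\tau_i}\|^2$ and the spectral-norm residual bound \eqref{sigma:linear}. The only points that need care are (i) bookkeeping the two powers of $d$ so that \eqref{SGD:LS} is literally the $g_{\tau_i}$-iteration of Theorem \ref{thm:batchweight}, (ii) the observation that passing to the upper bound $\sigma_\tau^2 = d\sum_i\|\bA_{\tau_i}\|^2\|\bA_{\tau_i}\bx_* - \bb_{\tau_i}\|_2^2$ preserves the conclusion, and (iii) the routine simplifications using $d = n/b$ and $\mu^{-1} = \sigma_{\min}^{-2}(\bA)$.
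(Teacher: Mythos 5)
Your proposal is correct and follows exactly the route the paper intends: the corollary is stated as a direct specialization of Theorem \ref{thm:batchweight} using the formulas $L_{\tau_i} = d\|\bA_{\tau_i}\|^2$, $\overline{L}_\tau = \sum_i \|\bA_{\tau_i}\|^2$, and the spectral-norm residual bound derived immediately before it, with the two factors of $d$ cancelling in the update and the upper bound on $\sigma_\tau^2$ being admissible because condition (3) is only an inequality. Your identification $\mu = \sigma_{\min}^2(\bA)$ is the right one (the paper's earlier line ``$\mu = \sigma_{\min}^{-1}(\bA)$'' is a typo, as the corollary's own $\sigma_{\min}^{-2}$ and $\sigma_{\min}^{-4}$ factors confirm), so no gaps remain.
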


This corollary suggests a heuristic for batching and weighting in SGD for least squares problems, in order to optimize the convergence rate. Note of course that, like other similar results for SGD, it is only a heuristic since in particular to compute the step size and required number of iterations in \eqref{LSstep} and \eqref{k:batchweight}, one needs an estimate on the size of the system residual $\|\bA\bx_\star-\bb\|_2$ (which is of course zero in the consistent case).  We summarize the desired procedure here:

\begin{enumerate}
\item Find a partition $\tau_1, \tau_2, \dots, \tau_d$ that roughly minimizes $\sum_{i=1}^{d} \| \bA_{\tau_i} \|^2$ among all such partitions.
\item Apply the weighted SGD algorithm \eqref{SGD:LS} using weights given by \eqref{w:batchweights}.
\end{enumerate}

We can compare the results of Corollary \ref{LS:batchweight} to the results for weighted SGD when a single functional is selected in each iteration, where the number of iterations to achieve expected error $\varepsilon$ is 
\begin{equation}
\label{k:standard}
k = \left\lceil4 \log(2 \varepsilon_0/\varepsilon) \left(  \sigma_{\min}^{-2}(\bA) \sum_{i=1}^n \| \ba_i  \|^2  + \frac{n \sigma_{\min}^{-4}(\bA)  \sum_{i=1}^n  \| \ba_{i} \|^2 \| \langle  \ba_{i}, \bx_{*} \rangle - b_i \|_2^2}{ \varepsilon}  \right)\right\rceil.
\end{equation}
That is, the ratio between the standard weighted number of iterations $k_{stand}$ in \eqref{k:standard} and the batched weighted number of iterations $k_{batch}$ in \eqref{k:batchweight} is
\begin{equation}
\label{k:ratio}
\frac{k_{stand}}{k_{batch}} = \frac{\varepsilon \sum_{i=1}^n \| \ba_i  \|^2  + n \sigma_{\min}^{-2}(\bA)  \sum_{i=1}^n  \| \ba_{i} \|^2 \| \langle  \ba_{i}, \bx_{*} \rangle - b_i \|_2^2}{ \varepsilon \sum_{i=1}^d \| \bA_{\tau_i} \|^2  + d \sigma_{\min}^{-2}(\bA)  \sum_{i=1}^d  \| \bA_{\tau_i} \|^2 \| \bA_{\tau_i} \bx_{*} - \bb_{\tau_i} \|_2^2}.
\end{equation}
In case the least squares residual error is uniformly distributed over the $n$ indices, that is, $ \| \langle  \ba_{i}, \bx_{*} \rangle - b_i \|_2^2 \approx \frac{1}{n} \| \bA \bx_{*} - \bb \|^2$ for each $i \in [n]$, this factor reduces to 
\begin{equation}
\label{k:UNIFORMratio}
\frac{k_{stand}}{k_{batch}} = \frac{  \| \bA  \|_F^2}{ \sum_{i=1}^d \| \bA_{\tau_i} \|^2}.
\end{equation}
It follows thus that the combination of batching and weighting in this setting always reduces the iteration complexity compared to weighting alone, and can result in up to a factor of $b$ speedup:
$$
1 \leq \frac{k_{stand}}{k_{batch}}  \leq b.
$$
In the remainder of this section, we consider several families of matrices where the maximal speedup is achieved, $\frac{k_{stand}}{k_{batch}}  \approx b$.     We also take into account the computational cost of computing the norms $\| \bA_{\tau_i} \|^2$ which determine the weighted sampling strategy.

\begin{description}
\item[\textbf{Orthonormal systems}]  It is clear that the advantage of mini-batching is strongest when the rows of $\bA$ in each batch are orthonormal.  In the extreme case where $\bA$ has orthonormal rows, we have
\begin{align}\label{ortho}
\overline{L}_\tau &= \sum_{i=1}^{d} \|\bA_{\tau_i}^*\bA_{\tau_i}\|  
= \frac{n}{b} = \frac{1}{b} \overline{L}.
\end{align}
Thus for orthonormal systems, we gain a factor of $b$ by using  mini-batches of size $b$.  However, there is little advantage to weighting in this case as all Lipschitz constants are the same. \\ 

\item[\textbf{Incoherent systems}] More generally, the advantage of mini-batching is strong when the rows $\ba_i$ within any particular batch are \emph{nearly} orthogonal.  Suppose that each of the batches is well-conditioned in the sense that 
\begin{equation}
\label{eq:incoherence}
\sum_{i=1}^n \| \ba_i \|_2^2 \geq C' n, \quad \quad \|\bA_{\tau_i}^*\bA_{\tau_i}\| = \|\bA_{\tau_i}\bA_{\tau_i}^*\|   \leq C, \quad \quad i = 1, \dots, d,
\end{equation}
For example, if $\bA^{*}$ has the \textit{restricted isometry property} \cite{RefWorks:48} of level $\delta$ at sparsity level $b$, \eqref{eq:incoherence} holds with $C \leq 1 + \delta$.  Alternatively, if $\bA$ has unit-norm rows and is incoherent, i.e. $ \max_{i \neq j} | \langle \ba_i, \ba_j \rangle | \leq \frac{\alpha}{b-1}$, then \eqref{eq:incoherence} holds with constant $C \leq 1 + \alpha$ by Gershgorin circle theorem.  

If the incoherence condition \eqref{eq:incoherence} holds, we gain a factor of $b$ by using weighted mini-batches of size $b$:
\begin{align*}
\overline{L}_\tau &= \sum_{i=1}^{d} \|\bA_{\tau_i}^*\bA_{\tau_i}\|  
\leq C \frac{n}{b}
\leq \frac{C}{C'} \frac{ \overline{L}}{b}.
\end{align*}

\item[\textbf{Incoherent systems, variable row norms}] More generally, consider the case where the rows of $\bA$ are nearly orthogonal to each other, but not normalized as in \eqref{eq:incoherence}.  We can then write $\bA = \bD \bPsi,$ where $\bD$ is an $n \times n$ diagonal matrix with entry $d_{ii} = \| \ba_i \|_2$, and $\bPsi$ with normalized rows satisfies
$$
\|\bPsi_{\tau_i}^*\bPsi_{\tau_i}\| = \|\bPsi_{\tau_i}\bPsi_{\tau_i}^*\|   \leq C, \quad \quad i = 1, \dots, d,
$$
as is the case if, e.g.,  $\bPsi$ has the restricted isometry property or $\bPsi$ is incoherent.  

\bigskip
In this case, we have
\begin{align}
\|\bA_{\tau_i}^*\bA_{\tau_i}\|  &= \|\bA_{\tau_i}\bA_{\tau_i}^*\|  =  \| \bD_{\tau_i} \bPsi_{\tau_i} \bPsi_{\tau_i}^{*} \bD_{\tau_i} \| \nonumber \\
&\leq   \max_{k \in \tau_i} \| \ba_k \|_2^2 \| \bPsi_{\tau_i} \bPsi_{\tau_i}^{*} \| \nonumber \\
&\leq  C \max_{k \in \tau_i} \| \ba_k \|_2^2, \quad \quad i = 1, \dots, d. \label{max-norm}
\end{align}
Thus, 
\begin{equation}
\overline{L}_\tau = \sum_{i=1}^{d} \|\bA_{\tau_i}^*\bA_{\tau_i}\|  \leq C \sum_{i=1}^{d}  \max_{k \in \tau_i} \| \ba_k \|_2^2.
\end{equation}
In order to minimize the expression on the right hand side over all partitions into blocks of size $b$, we partition the rows of $\bA$ according to the order of the decreasing rearrangement of their row norms.  This batching strategy results in a factor of $b$ gain in iteration complexity compared to weighting without batching:
\begin{align}\label{max-norm-bound}
\overline{L}_\tau &\leq C \sum_{i=1}^{d} \| \ba_{((i-1)b +1)} \|_2^2 \nonumber \\
&\leq \frac{C}{b-1} \sum_{i=1}^{n} \| \ba_{i} \|_2^2 \nonumber \\
&\leq  \frac{C'}{b} \overline{L}.
\end{align}
\end{description}

We now turn to the practicality of computing the distribution given by the constants $L_{\tau_i}$.  We propose several options to efficiently compute these values given the ability to parallelize over $b$ cores.

\begin{description}
\item[\textbf{Max-norm}] The discussion above suggests the use of the maximum row norm of a batch as a proxy for the Lipschitz constant.  Indeed, \eqref{max-norm} shows that the row norms give an upper bound on these constants.  Then, \eqref{max-norm-bound} shows that up to a constant factor, such a proxy still has the potential to lead to an increase in the convergence rate by a factor of $b$.  Of course, computing the maximum row norm of each batch costs on the order of $mn$ flops (the same as the non-batched weighted SGD case). 

\item[\textbf{Power method}] In some cases, we may utilize the power method to approximate $\|\bA_{\tau_i}^*\bA_{\tau_i}\|$ efficiently.  Suppose that for each batch we can approximate this quantity by $\hat{Q}_{\tau_i}$.  Classical results on the power method allow one to approximate the norm to within an arbitrary additive error, with a number of iterations that depends on the spectral gap of the matrix.   
An alternative approach, that we consider here, can be used to obtain approximations leading to a \textit{multiplicative} factor difference in the convergence rate, without dependence on the eigenvalue gaps $\lambda_1/\lambda_2$ within batches.  For example, \cite[Lemma 5]{klein1996efficient} shows that with high probability with respect to a randomized initial direction to the power method, after $T \geq \varepsilon^{-1}\log(\varepsilon^{-1}b)$ iterations of the power method, one can guarantee that 
$$
\|\bA_{\tau_i}^*\bA_{\tau_i}\| \geq \hat{Q}_{\tau_i} \geq \frac{\|\bA_{\tau_i}^*\bA_{\tau_i}\|}{1+\varepsilon}.
$$
At $b^2$ computations per iteration of the power method, the total computational cost (to compute all quantities in the partition), shared over all $b$ cores, is $b\varepsilon^{-1}\log(\varepsilon^{-1}\log(b))$.  This is actually potentially much \textit{lower} than the cost to compute all row norms $L_i = \|\ba_i\|_2^2$ as in the standard non-batched weighted method. 
In this case, the power method yields 
$$
\overline{L}_\tau \geq \frac{b}{n} \sum_{i=1}^{d} \frac{n}{b}\hat{Q}_{\tau_i} \geq \frac{\overline{L}_\tau}{1+\varepsilon},
$$
for a constant $\varepsilon$.

\end{description}

\section{Mini-batch SGD with weighting: the non-smooth case}\label{sec:non-smooth}

We next present analogous results to the previous section for objectives which are strongly convex but lack the smoothness assumption.  Like the least squares objective in the previous section, our motivating example here will be the support vector machine (SVM) with hinge loss objective.  

A classical result (see e.g. \cite{nes04,shamir2012stochastic,rakhlin2011making}) for SGD establishes a convergence bound of SGD with non-smooth objectives.  In this case, rather than taking a step in the gradient direction of a functional, we move in a direction of a subgradient.  Instead of utilizing the Lipschitz constants of the gradient terms, we utilize the Lipschitz constants of the actual functionals themselves. Note that in the non-smooth case one cannot guarantee convergence of the iterates $\bx_k$ to a unique minimizer $\bx_\star$ so instead one seeks convergence of the objective value itself. Concretely, a classical bound is of the following form.

\begin{proposition}\label{propNS}
 Let the objective $F(\bx) = \mathbb{E}g_i(\bx)$ with minimizer $\bx_\star$ be a $\mu$-strongly convex (possibly non-smooth) objective.  Run SGD using a subgradient $h_i$ of a randomly selected functional $g_i$ at each iteration.  Assume that $\mathbb{E}h_i \in \partial F(\bx_k)$ (expectation over the selection of subgradient $h_i$) and that
$$
\max_{\bx, \by} \frac{\| g_i(\bx) - g_i(\by) \|}{\| \bx - \by \|} \leq \max_{\bx} \|  h_i(\bx) \| \leq G_i.
$$
Set $\overline{G^2} = \mathbb{E}(G_i^2)$.  Using step size $\gamma = \gamma_k = 1/(\mu k)$, we have 
\begin{equation}\label{nssgd}
\mathbb{E} \left[F(\bx_k) - F(\bx_\star)\right] \leq \frac{C\overline{G^2} (1+\log k)}{\mu k},
\end{equation}
where $C$ is an absolute constant.
\end{proposition}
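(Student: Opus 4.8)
The plan is to track, along the iteration $\bx_{t+1} = \bx_t - \gamma_t h_t$, the squared distance $r_t = \|\bx_t - \bx_\star\|_2^2$ and the suboptimality gap $\delta_t = F(\bx_t) - F(\bx_\star) \ge 0$, and then to convert a telescoped bound on suffix sums of $\delta_t$ into a bound on the single terminal iterate (the statement asks for $\bx_k$ itself, not a running average), following the last-iterate scheme of \cite{shamir2012stochastic,rakhlin2011making}. First I would expand one step,
$$r_{t+1} = r_t - 2\gamma_t \langle h_t, \bx_t - \bx_\star \rangle + \gamma_t^2 \|h_t\|_2^2,$$
take expectation conditioned on $\bx_t$, and use three facts: the unbiasedness hypothesis $\mathbb{E}h_t \in \partial F(\bx_t)$; the $\mu$-strong convexity inequality $\langle \bar h, \bx_t - \bx_\star\rangle \ge \delta_t + \tfrac{\mu}{2} r_t$ valid for every $\bar h \in \partial F(\bx_t)$; and the second-moment bound $\mathbb{E}\|h_t\|_2^2 \le \overline{G^2}$. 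With $\gamma_t = 1/(\mu t)$ this gives
$$\mathbb{E}[r_{t+1}] \le \Big(1 - \tfrac{1}{t}\Big)\mathbb{E}[r_t] - \frac{2}{\mu t}\mathbb{E}[\delta_t] + \frac{\overline{G^2}}{\mu^2 t^2}.$$

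Rearranging into $\mathbb{E}[\delta_t] \le \tfrac{\mu(t-1)}{2}\mathbb{E}[r_t] - \tfrac{\mu t}{2}\mathbb{E}[r_{t+1}] + \tfrac{\overline{G^2}}{2\mu t}$ and summing over a suffix $t = j, \dots, k$, the weighted distance terms telescope, leaving
$$\sum_{t=j}^{k} \mathbb{E}[\delta_t] \le \frac{\mu(j-1)}{2}\mathbb{E}[r_j] + \frac{\overline{G^2}}{2\mu}\sum_{t=j}^{k}\frac{1}{t}.$$
To control $\mathbb{E}[r_j]$ I would run the distance recursion by itself, discarding the nonpositive $\delta$-term, and solve it to get the standard decay $\mathbb{E}[r_j] \le c\,\overline{G^2}/(\mu^2 j)$ for an absolute constant $c$; this renders the first term $O(\overline{G^2}/\mu)$ and shows every suffix sum is at most $C_1\overline{G^2}(1+\log k)/\mu$. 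With $j=1$ this already reproduces the classical averaged guarantee, but since the proposition concerns the last iterate $\bx_k$, one more ingredient is required.

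That ingredient is a one-sided increment bound. The subgradient inequality of $F$ at $\bx_{t+1}$ gives $\delta_{t+1} - \delta_t \le \langle \bar h_{t+1}, \bx_{t+1} - \bx_t\rangle = -\gamma_t \langle \bar h_{t+1}, h_t\rangle$; since $F = \mathbb{E}g_i$, one may choose $\bar h_{t+1} \in \partial F(\bx_{t+1})$ with $\|\bar h_{t+1}\|_2 \le \sqrt{\overline{G^2}}$, so Cauchy--Schwarz yields
$$\mathbb{E}[\delta_{t+1}] \le \mathbb{E}[\delta_t] + \frac{\overline{G^2}}{\mu t}.$$
Writing $\delta_k \le \delta_m + \sum_{t=m}^{k-1}(\delta_{t+1}-\delta_t)$ and averaging this over the window start $m = j, \dots, k$ produces the averaging identity
$$(k-j+1)\,\mathbb{E}[\delta_k] \le \sum_{t=j}^{k}\mathbb{E}[\delta_t] + \sum_{m=j}^{k}\sum_{t=m}^{k-1}\frac{\overline{G^2}}{\mu t},$$
into which the suffix-sum bound above feeds directly.

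The main obstacle, and the genuinely hard part, is that a single application of this identity (say with window $j = k/2$) leaves an irreducible $O(\overline{G^2}/\mu)$ contribution from the double sum and therefore fails to deliver the $1/k$ decay. I would resolve this exactly as in \cite{shamir2012stochastic}: apply the averaging identity over a geometric sequence of window lengths $k, k/2, k/4, \dots$ and telescope the resulting estimates for $\mathbb{E}[\delta_k]$, so that the accumulated increment corrections sum to $O\big(\overline{G^2}(1+\log k)/(\mu k)\big)$ instead of a constant. Carefully bookkeeping the constants across these nested windows is what produces the absolute constant $C$ and the single logarithmic factor in \eqref{nssgd}.
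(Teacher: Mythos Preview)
The paper does not actually prove this proposition; it is stated as a classical result with a citation to \cite{nes04,shamir2012stochastic,rakhlin2011making} and is used as a black box for the subsequent batched/weighted analysis (Theorem~\ref{thm:batchweightNS}). So there is no ``paper's own proof'' to compare against.

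Your proposal is a faithful reconstruction of the Shamir--Zhang last-iterate argument that the paper cites. The one-step recursion, the suffix-sum telescoping, the $O(\overline{G^2}/(\mu^2 j))$ distance bound, the increment inequality $\mathbb{E}[\delta_{t+1}] \le \mathbb{E}[\delta_t] + \overline{G^2}/(\mu t)$, and the geometric windowing to extract the $(1+\log k)/k$ rate for the final iterate are all correct and are precisely the ingredients of \cite{shamir2012stochastic}. One small point worth tightening: when you bound $\|\bar h_{t+1}\|_2$, since $F = \mathbb{E} g_i$ you can take $\bar h_{t+1} = \mathbb{E}[h_i(\bx_{t+1})]$ and get $\|\bar h_{t+1}\|_2 \le \mathbb{E} G_i = \overline{G} \le \sqrt{\overline{G^2}}$ directly, which makes the Cauchy--Schwarz step clean despite the dependence of $\bar h_{t+1}$ on $h_t$ through $\bx_{t+1}$. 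Otherwise the argument is sound and matches the intended reference.
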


Such a result can be improved by utilizing averaging of the iterations; for example, if $\bx_k^\alpha$ denotes the average of the last $\alpha k$ iterates, then the convergence rate bound \eqref{nssgd} can be improved to:

$$
\mathbb{E} \left[F(\bx_k) - F(\bx_\star)\right] \leq \frac{C\overline{G^2} \left(1+\log \frac{1}{\min(\alpha, (1+1/k)-\alpha)}\right)}{\mu k} \leq \frac{C\overline{G^2} \left(1+\log \frac{1}{\min(\alpha, 1-\alpha)}\right)}{\mu k}.
$$

Setting $m_\alpha = 1+\log \frac{1}{\min(\alpha, 1-\alpha)}$, we see that to obtain an accuracy of $\mathbb{E} \left[F(\bx_k) - F(\bx_\star)\right] \leq \varepsilon$, it suffices that
$$
k \geq \frac{C\overline{G^2}m_\alpha}{\mu\varepsilon}.
$$

 In either case, it is important to notice the dependence on $\overline{G^2} = \mathbb{E}(G_i^2)$.  By using weighted sampling with weights $p(i) = G_i / \sum_i G_i$, we can improve this dependence to one on $(\overline{G})^2$, where $\overline{G} = \mathbb{E}G_i$ \cite{needell2014stochastic,zhao2014stochastic}.  Since $\overline{G^2} - (\overline{G})^2 = $ Var$(G_i)$, this improvement reduces the dependence by an amount equal to the variance of the Lipschitz constants $G_i$.  Like in the smooth case, we now consider not only weighting the distribution, but also by batching the functionals $g_i$.  This yields the following result, which we analyze for the specific instance of SVM with hinge loss below.

\begin{theorem}
\label{thm:batchweightNS}
Instate the assumptions and notation of Proposition \ref{propNS}.  Consider the $d = n/b$ batches $g_{\tau_i}(\bx) = \frac{1}{b}\sum_{j\in\tau_i} g_j(\bx)$, and assume each batch $g_{\tau_i}$ has Lipschitz constant $G_{\tau_i}$.  Write $\overline{G}_\tau = \mathbb{E}G_{\tau_i}$.  Run the weighted batched SGD method with averaging as described above, with step size $\gamma / p(\tau_i)$. 
For any desired $\varepsilon$, 
it holds that after
$$
k = \frac{C(\overline{G}_\tau)^2 m_\alpha}{\mu\varepsilon}
$$
iterations with weights 
\begin{equation}\label{newweightsNS}
p(\tau_i) = \frac{G_{\tau_i}}{\sum_j{G}_{\tau_j}},
\end{equation}
we have $\mathbb{E}^{(p)} \left[F(\bx_k) - F(\bx_\star)\right] \leq \varepsilon$ where $\mathbb{E}^{(p)}[\cdot ]$ means the expectation with respect to the index at each iteration drawn according to the weighted distribution \eqref{newweightsNS}. 
\end{theorem}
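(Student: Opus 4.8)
The plan is to observe that, just as Theorem \ref{thm:batchweight} follows from Proposition \ref{prop:main}, this theorem should follow from Proposition \ref{propNS} by treating the batched objective $F(\bx) = \frac{1}{d}\sum_{i=1}^d g_{\tau_i}(\bx) = \mathbb{E}\,g_{\tau_i}(\bx)$ as a fresh instance of the non-smooth problem --- this time with $d$ functionals $g_{\tau_i}$ in place of the $n$ functionals $g_j$ --- and then invoking the \emph{weighted} refinement of Proposition \ref{propNS}, namely the one that trades $\overline{G^2}=\mathbb{E}(G_i^2)$ for $(\overline{G})^2=(\mathbb{E}G_i)^2$ under the weights $p(i)=G_i/\sum_i G_i$, as recorded above from \cite{needell2014stochastic,zhao2014stochastic}.

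Carrying this out, I would first note that replacing $\{g_j\}_{j=1}^n$ by $\{g_{\tau_i}\}_{i=1}^d$ does not change $F$, hence leaves the strong convexity constant $\mu$ and the minimizer $\bx_\star$ untouched. Second, I would verify that the weighted batched update furnishes an unbiased subgradient of $F$: taking $h_{\tau_i} = \frac{1}{b}\sum_{j\in\tau_i}h_j$ with $h_j\in\partial g_j$, selecting batch $\tau_i$ with probability $p(\tau_i)$, and using the rescaled direction $\frac{1}{d\,p(\tau_i)}h_{\tau_i}$, one computes $\mathbb{E}^{(p)}[\frac{1}{d\,p(\tau_i)}h_{\tau_i}] = \frac{1}{d}\sum_{i=1}^d h_{\tau_i} = \frac{1}{n}\sum_{j=1}^n h_j \in \partial F(\bx_k)$, which is exactly the hypothesis needed to apply the non-smooth bound. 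Third, since $g_{\tau_i}=\frac{1}{b}\sum_{j\in\tau_i}g_j$ has $\max_{\bx}\|h_{\tau_i}(\bx)\|\le\frac{1}{b}\sum_{j\in\tau_i}G_j$, the batched Lipschitz constant $G_{\tau_i}$ is a legitimate bound of the type demanded by Proposition \ref{propNS}. Plugging the $d$ functionals $g_{\tau_i}$, the weights \eqref{newweightsNS}, and the step size $\gamma/p(\tau_i)$ into the weighted non-smooth bound with averaging then gives $\mathbb{E}^{(p)}[F(\bx_k)-F(\bx_\star)] \le C(\overline{G}_\tau)^2 m_\alpha/(\mu k)$, and solving for the smallest $k$ with right-hand side at most $\varepsilon$ yields the claimed iteration count $k = C(\overline{G}_\tau)^2 m_\alpha/(\mu\varepsilon)$.

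The one step that is not purely mechanical is making the weighted non-smooth estimate precise: one must confirm that the $(\overline{G})^2$-in-place-of-$\overline{G^2}$ improvement --- together with the averaging refinement and its constant $m_\alpha$ --- survives verbatim when the functionals are the $g_{\tau_i}$, the step sizes carry the $1/p(\tau_i)$ factor, and $G_i$ is replaced by $G_{\tau_i}$. I expect to handle this either by citing the weighted SGD analysis of \cite{needell2014stochastic,zhao2014stochastic} directly, or, following the remark after Proposition \ref{prop:main}, by rewriting $F$ as a uniform average of suitably rescaled copies of the $g_{\tau_i}$ so that the weighted scheme becomes an ordinary unweighted one and Proposition \ref{propNS} applies off the shelf. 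Finally I would record, in parallel with the smooth case, that the triangle inequality gives $G_{\tau_i}\le\frac{1}{b}\sum_{j\in\tau_i}G_j$ and hence $\overline{G}_\tau\le\overline{G}$, so that batching together with weighting can only improve --- never worsen --- the rate obtained from weighting alone.
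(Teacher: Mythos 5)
Your proposal is correct and follows essentially the same route as the paper: the paper's proof is precisely your second alternative, rewriting $F(\bx)=\mathbb{E}^{(p)}\bigl(\hat{g}_{\tau_i}(\bx)\bigr)$ with $\hat{g}_{\tau_i}=\bigl(\frac{b}{n}\sum_j G_{\tau_j}\bigr)\frac{g_{\tau_i}}{G_{\tau_i}}$ so that all rescaled functionals share the Lipschitz bound $\frac{b}{n}\sum_j G_{\tau_j}$, whence $\mathbb{E}^{(p)}\hat{G}_i^2=(\overline{G}_\tau)^2$ and Proposition \ref{propNS} applies off the shelf. Your additional checks (unbiasedness of the rescaled subgradient and $\overline{G}_\tau\le\overline{G}$ via the triangle inequality) are consistent with the paper's surrounding discussion.
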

\begin{proof}
 Applying weighted SGD with weights $p(\tau_i)$, we re-write the objective $F(\bx) = \mathbb{E} \left( g_i(\bx) \right)$ as 
$F(\bx) = \mathbb{E}^{(p)}  \left(\hat{g}_{\tau_i}(\bx) \right)$,  where 
$$
\hat{g}_{\tau_i}(x) =\left(\frac{1}{n}\sum_{j} G_{\tau_j}\right)  \left( \frac{1}{G_{\tau_i}}\sum_{j\in\tau_i}g_j(\bx) \right) = 
\left(\frac{b}{n}\sum_{j} G_{\tau_j}\right)  \left( \frac{g_{\tau_i}(\bx)}{G_{\tau_i}} \right).
$$

Then, the Lipschitz constant $\hat{G}_i$ of $\hat{g}_{\tau_i}$ is bounded above by $\hat{G}_i = \frac{b}{n}\sum_{j} G_{\tau_j}$, and so 
$$\mathbb{E}^{(p)} \hat{G}_i^2 = \sum_i \frac{G_{\tau_i}}{\sum_j{G}_{\tau_j}} \left(\frac{b}{n}\sum_{j} G_{\tau_j} \right)^2 = \left(\frac{b}{n}\sum_{j} G_{\tau_j} \right)^2 = (\mathbb{E} G_{\tau_i} )^2 = (\overline{G}_\tau)^2.$$  
The result follows from an application of Proposition \ref{propNS}.
\qed
\end{proof}

We now formalize these bounds and weights for the SVM with hinge loss objective.  Other objectives such as L1 regression could also be adapted in a similar fashion, e.g. utilizing an approach as in \cite{yang2016weighted}.

\subsection{SVM with Hinge Loss} 

We now consider the SVM with hinge loss problem as a motivating example for using batched weighted SGD for non-smooth objectives. 
 Recall the SVM with hinge loss objective is 
\begin{equation}\label{obj_hinge}
F(\bx) := \frac{1}{n}\sum_{i=1}^n [y_i \langle \bx, \ba_i \rangle ]_{+} + \frac{\lambda}{2} \| \bx \|_2^2 = \mathbb{E} g_i(\bx),
\end{equation}
where $y_i \in \{ \pm 1 \}$, $[u]_{+} = \max(0, u)$, and
$$
g_i(\bx) =  [y_i \langle \bx, \ba_i \rangle ]_{+} + \frac{\lambda}{2} \| \bx \|_2^2.
$$
This is a key example where the components are ($\lambda$-strongly) convex but no longer smooth.  Still, each $g_i$ has a well-defined subgradient:
$$
\nabla g_i(\bx) = \chi_i(\bx) y_i \ba_i + \lambda\bx,
$$
where $ \chi_i(\bx) = 1$ if $y_i  \langle \bx, \ba_i \rangle < 1$ and $0$ otherwise.  
It follows that $g_i$ is Lipschitz and its Lipschitz constant is bounded by
$$
G_i := \max_{\bx, \by} \frac{\| g_i(\bx) - g_i(\by) \|}{\| \bx - \by \|} \leq \max_{\bx} \| \nabla g_i(\bx) \| \leq \| \ba_i \|_2 +\lambda.
$$
As shown in \cite{zhao2014stochastic},  \cite{needell2014stochastic},  in the setting of non-smooth objectives of the form \eqref{obj_hinge}, where  the components are not necessarily smooth, but each $g_i$ is $G_i$-Lipschitz, the performance of SGD depends on the quantity 
$\overline{G^2} = \mathbb{E}[G_i^2]$.   In particular, the iteration complexity depends linearly on $\overline{G^2}$.   

\bigskip

For the hinge loss example, we have calculated that
$$
\overline{G^2} = \frac{1}{n} \sum_{i=1}^n \left(\| \ba_i \|_2 + \lambda\right)^2 \leq 2\lambda^2 + \frac{2}{n}\sum_{i=1}^n \| \ba_i \|_2^2 .
$$
Incorporating (non-batch) weighting to this setting, as discussed in  \cite{needell2014stochastic}, reduces the iteration complexity to depend linearly on $(\overline{G})^2 = (\mathbb{E}[G_i])^2$, which is at most $\overline{G^2}$ and can be as small as $\frac{1}{n} \overline{G^2}$.  For the hinge loss example, we have
$$
(\overline{G})^2 =  \left( \lambda + \frac{1}{n} \sum_{i=1}^n \| \ba_i \|_2 \right)^2 .
$$
We note here that one can incorporate the dependence on the regularizer term $\frac{\lambda}{2}\|\bx\|_2^2$ in a more optimal way by bounding the functional norm only over the iterates themselves, as in \cite{takavc2013mini,rakhlin2011making}; however, we choose a crude upper bound on the Lipschitz constant here in order to maintain a dependence on the \textit{average} constant rather than the \textit{maximum}, and only sacrifice a constant factor.
\subsubsection{Batched sampling}

The paper \cite{takavc2013mini} considered batched SGD for the hinge loss objective.  For batches $\tau_i$ of size $b$, 
let $g_{\tau_i}= \frac{\lambda}{2} \| \bx \|_2^2 + \frac{1}{b} \sum_{k \in \tau_i} [y_k \langle \bx, \ba_k \rangle ]_{+}$ and observe
$$
F(\bx) := \frac{1}{n}\sum_{i=1}^n [y_i \langle \bx, \ba_i \rangle ]_{+} + \frac{\lambda}{2} \| \bx \|_2^2 = \mathbb{E} g_{\tau_i}(\bx).
$$
We now bound the Lipschitz constant $G_\tau$ for a batch.  Let $\chi = \chi_k(\bx)$ and $\bA_{\tau}$ have rows $y_k \ba_k$ for $k \in \tau$.   We have
\begin{align}
\max_{\bx} \left\| \frac{1}{b} \sum_{k \in \tau_i} \chi_k(\bx) y_k \ba_k  \right\|_2 
&=  \max_{\bx} \sqrt{  \left\langle \frac{1}{b} \sum_{k \in \tau_i} \chi_k(\bx) y_k \ba_k,  \frac{1}{b} \sum_{k \in \tau_i} \chi_k(\bx) y_k \ba_k \right\rangle }  \nonumber \\
&= \frac{1}{b}  \max_{\bx} \sqrt{ \chi^* \bA_{\tau} \bA_{\tau}^{*}  \chi}  \nonumber \\
&\leq \frac{1}{b}  \sqrt{ b \| \bA_{\tau} \bA_{\tau}^{*} \| }  \nonumber \\
&=\frac{1}{\sqrt{b}} \| \bA_{\tau} \|, 
\end{align}
and therefore $G_{\tau} \leq \frac{1}{\sqrt{b}} \| \bA_{\tau} \| + \lambda$.
Thus, for batched SGD without weights, the iteration complexity depends linearly on 
\begin{align}
\overline{G_{\tau}^2} &= \frac{b}{n} \sum_{i=1}^{d} G_{\tau_i}^2 \nonumber \\
&\leq 2\lambda^2 + \frac{2}{n} \sum_{i=1}^{d} \| \bA_{\tau_i} \|^2 \nonumber \\
&= 2\lambda^2 + \frac{2}{n} \sum_{i=1}^{d} \| \bA_{\tau_i}^{*} \bA_{\tau_i} \|. \nonumber 
\end{align}
Even without weighting, we already see potential for drastic improvements, as noted in \cite{takavc2013mini}.  For example, in the orthonormal case, where $\| \bA_{\tau_i}^{*} \bA_{\tau_i} \| = 1$ for each $\tau_i$, we see that with appropriately chosen $\lambda$, $\overline{G_{\tau}^2}$ is on the order of $\frac{1}{b}$, which is a factor of $b$ times smaller than $\overline{G^2} \approx 1$.  Similar factors are gained for the incoherent case as well, as in the smooth setting discussed above.  Of course, we expect even more gains by utilizing both batching and weighting.

\subsubsection{Weighted batched sampling}
Incorporating weighted batched sampling, where we sample batch $\tau_i$ with probability proportional to $G_{\tau_i},$ the iteration complexity is reduced to a linear dependence on $(\overline{G_{\tau}})^2$, as in Theorem \ref{thm:batchweightNS}.  For hinge loss, we calculate
\begin{align*}
(\overline{G_{\tau}})^2 &= \left( \frac{b}{n} \sum_{i=1}^{d} G_{\tau_i} \right)^2  \leq \left( \frac{b}{n} \sum_{i=1}^{d} \frac{1}{\sqrt{b}} \| \bA_{\tau_i} \|  +\lambda\right)^2 = \left( \lambda + \frac{\sqrt{b}}{n} \sum_{i=1}^{d} \| \bA_{\tau_i} \|  \right)^2.
\end{align*}

We thus have the following guarantee for the hinge loss objective.

\begin{corollary}
\label{HL:batchweight}
Instate the notation of Theorem \ref{thm:batchweightNS}.
Consider $F(\bx) = \frac{1}{n}\sum_{i=1}^n [y_i \langle \bx, \ba_i \rangle ]_{+} + \frac{\lambda}{2} \| \bx \|_2^2$.  Consider the batched weighted SGD iteration
\begin{equation}
\label{SGD:HL}
\bx_{k+1} \leftarrow \bx_{k} - \frac{1}{\mu k p(\tau_i)} \left( \lambda\bx_k + \frac{1}{b}\sum_{j\in\tau_i}\chi_j(\bx_k)y_j\ba_j\right),
\end{equation}
where $ \chi_j(\bx) = 1$ if $y_j  \langle \bx, \ba_j \rangle < 1$ and $0$ otherwise. Let $\bA_{\tau}$ have rows $y_j \ba_j$ for $j \in \tau$.
For any desired $\varepsilon$, 
we have that after
\begin{equation}
\label{k:batchweightHL}
k = \frac{Cm_\alpha\left( \lambda + \frac{\sqrt{b}}{n} \sum_{i=1}^{d} \| \bA_{\tau_i} \|  \right)^2}{\lambda\varepsilon}
\end{equation}
iterations of \eqref{SGD:HL} with weights 
\begin{equation}
\label{w:batchweightsHL}
p(\tau_i) = \frac{\|\bA_{\tau_i}\| + \lambda\sqrt{b}}{\frac{n}{\sqrt{b}}\lambda + \sum_j\|\bA_{\tau_j}\|},
\end{equation}
it holds that $\mathbb{E}^{(p)} [ F({\bf x}_k) - F({\bf x}_{*}) ] \leq \varepsilon$.
\end{corollary}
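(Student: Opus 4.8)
The plan is to obtain Corollary~\ref{HL:batchweight} as a direct specialization of Theorem~\ref{thm:batchweightNS} to the hinge-loss objective~\eqref{obj_hinge}; essentially all the work is in identifying the constants $\mu$, $G_{\tau_i}$ and $\overline{G}_\tau$ for this objective and in rewriting the abstract weights~\eqref{newweightsNS} in closed form.

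First I would check the hypotheses of Proposition~\ref{propNS} and Theorem~\ref{thm:batchweightNS}. The objective $F$ in~\eqref{obj_hinge} is a sum of convex hinge terms plus the $\lambda$-strongly convex regularizer $\frac{\lambda}{2}\|\bx\|_2^2$, hence is $\lambda$-strongly convex, so we take $\mu = \lambda$. The vector $\lambda\bx + \frac{1}{b}\sum_{j\in\tau_i}\chi_j(\bx)y_j\ba_j$ is a subgradient of $g_{\tau_i}(\bx) = \frac{\lambda}{2}\|\bx\|_2^2 + \frac{1}{b}\sum_{j\in\tau_i}[y_j\langle\bx,\ba_j\rangle]_+$, and its average over a uniformly drawn batch index lies in $\partial F(\bx)$; together with the weighting this supplies the unbiasedness condition required by the theorem, and it identifies~\eqref{SGD:HL} as the weighted batched subgradient iteration with step size $\gamma_k = 1/(\mu k) = 1/(\lambda k)$.

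Next I would pin down the batch Lipschitz constant, importing the computation already carried out in the batched-sampling discussion above: since $\chi_k(\bx)\in\{0,1\}$ we have $\chi^* \bA_\tau \bA_\tau^* \chi \le \|\chi\|_2^2\,\|\bA_\tau\bA_\tau^*\| \le b\,\|\bA_\tau\|^2$, hence $\max_\bx \bigl\|\frac{1}{b}\sum_{k\in\tau_i}\chi_k(\bx)y_k\ba_k\bigr\|_2 \le \frac{1}{\sqrt b}\|\bA_{\tau_i}\|$, and adding the contribution $\lambda$ of the regularizer's gradient (bounded over the iterates, exactly as in the unbatched case) gives the valid Lipschitz bound $G_{\tau_i} = \frac{1}{\sqrt b}\|\bA_{\tau_i}\| + \lambda$ for $g_{\tau_i}$. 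Then $\overline{G}_\tau = \mathbb{E} G_{\tau_i} = \frac{b}{n}\sum_{i=1}^d G_{\tau_i} = \lambda + \frac{\sqrt b}{n}\sum_{i=1}^d \|\bA_{\tau_i}\|$, so $(\overline{G}_\tau)^2$ is exactly the squared quantity appearing in~\eqref{k:batchweightHL}; substituting into the iteration count $k = C(\overline{G}_\tau)^2 m_\alpha/(\mu\varepsilon)$ of Theorem~\ref{thm:batchweightNS} with $\mu = \lambda$ yields~\eqref{k:batchweightHL}.

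Finally I would rewrite the weights. By~\eqref{newweightsNS}, $p(\tau_i) = G_{\tau_i}/\sum_j G_{\tau_j}$; substituting $G_{\tau_i} = \frac{1}{\sqrt b}\|\bA_{\tau_i}\| + \lambda$ and multiplying numerator and denominator by $\sqrt b$ gives $p(\tau_i) = (\|\bA_{\tau_i}\| + \lambda\sqrt b)/(\sum_j \|\bA_{\tau_j}\| + d\sqrt b\,\lambda)$, and using $d = n/b$, so that $d\sqrt b = n/\sqrt b$, recovers~\eqref{w:batchweightsHL}; the conclusion $\mathbb{E}^{(p)}[F(\bx_k) - F(\bx_\star)] \le \varepsilon$ then follows immediately from Theorem~\ref{thm:batchweightNS}. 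I expect the only mild obstacle to be the bookkeeping point that we feed Theorem~\ref{thm:batchweightNS} the explicit upper bound $\frac{1}{\sqrt b}\|\bA_{\tau_i}\| + \lambda$ in place of the exact Lipschitz constant of $g_{\tau_i}$ (and, as flagged in the surrounding text, a crude handling of the regularizer term): this is harmless because any upper bound on a Lipschitz constant is itself a valid Lipschitz constant, so the theorem applies with that choice and~\eqref{k:batchweightHL} is at worst an overestimate of the iterations actually needed — precisely the average-versus-maximum trade-off discussed nearby. There is no genuine analytic difficulty beyond what is already contained in Theorem~\ref{thm:batchweightNS} and Proposition~\ref{propNS}.
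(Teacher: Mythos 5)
Your proposal is correct and follows essentially the same route as the paper: the corollary is obtained by specializing Theorem~\ref{thm:batchweightNS} with $\mu=\lambda$, the batch Lipschitz bound $G_{\tau_i} \leq \frac{1}{\sqrt{b}}\|\bA_{\tau_i}\|+\lambda$ derived in the preceding ``Batched sampling'' computation, and the resulting $\overline{G}_\tau = \lambda + \frac{\sqrt{b}}{n}\sum_i\|\bA_{\tau_i}\|$, with the weights \eqref{newweightsNS} rewritten exactly as you do. Your algebra for \eqref{w:batchweightsHL} and \eqref{k:batchweightHL} checks out, and your remark that an upper bound on the Lipschitz constant suffices matches the paper's own caveat about crudely bounding the regularizer term.
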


\section{Experiments}\label{sec:exps}

In this section we present some simple experimental examples that illustrate the potential of utilizing weighted mini-batching.  We consider several test cases as illustration.

\begin{description}
\item[\textbf{Gaussian linear systems}] The first case solves a linear system $\bA\bx=\bb$, where $\bA$ is a matrix with i.i.d. standard normal entries (as is $\bx$, and $\bb$ is their product).  In this case, we expect the Lipschitz constants of each block to be comparable, so the effect of weighting should be modest.  However, the effect of mini-batching in parallel of course still appears.  Indeed, Figure \ref{fig1} (left) displays the convergence rates in terms of iterations for various batch sizes, where each batch is selected with probability as in \eqref{w:batchweights}.  When batch updates can be run in parallel, we expect the convergence behavior to mimic this plot (which displays iterations).  We see that in this case, larger batches yield faster convergence.  In these simulations, the step size $\gamma$ was set as in \eqref{LSstep} (approximations for Lipschitz constants also apply to the step size computation) for the weighted cases and set to the optimal step size as in \cite[Corollary 3.2]{needell2014stochastic} for the uniform cases.  Behavior using uniform selection is very similar (not shown), as expected in this case since the Lipschitz constants are roughly constant.   
Figure \ref{fig1} (right) highlights the improvements in our proposed weighted batched SGD method versus the classical, single functional and unweighted, SGD method.  The power method refers to the method discussed at the end of Section \ref{sec:smooth}, and max-norm method refers to the approximation using the maximum row norm in a batch, as in \eqref{max-norm}.  The notation ``(opt)'' signifies that the optimal step size was used, rather than the approximation; otherwise in all cases both the sampling probabilities \eqref{w:batchweights} and step sizes \eqref{LSstep} were approximated using the approximation scheme given.   Not suprisingly, using large batch sizes yields significant speedup. 

\item[\textbf{Gaussian linear systems with variation}] We next test systems that have more variation in the distribution of Lipschitz constants.  We construct a matrix $\bA$ of the same size as above, but whose entries in the $k$th row are i.i.d. normally distributed with mean zero and variance $k^2$.   We now expect a large effect both from batching and from weighting.  In our first experiment, we select the fixed batches randomly at the onset, and compute the probabilities according to the Lipschitz constants of those randomly selected batches, as in \eqref{w:batchweights}.  The results are displayed in the left plot of Figure \ref{fig4}.  In the second experiment, we batch sequentially, so that rows with similar Lipschitz constants (row norms) appear in the same batch, and again utilized the weighted sampling.  The results are displayed in the center plot of Figure \ref{fig4}.  Finally, the right plot of Figure \ref{fig4} shows convergence when batching sequentially and then employing uniform (unweighted) sampling.  As our theoretical results predict, batching sequentially yields better convergence, as does utilizing weighted sampling.  

Since this type of system nicely highlights the effects of both weighting and batching, we performed additional experiments using this type of system.  Figure \ref{fig5} highlights the improvements gained by using weighting.  In the first plot, we see that for all batch sizes improvements are obtained by using weighting, even more so than in the standard normal case, as expected (note that we cut the curves off when the weighted approach reaches machine precision).  In the bottom plot, we see that the number of iterations to reach a desired threshold is also less using the various weighting schemes; we compare the sampling method using exact computations of the Lipshitz constants (spectral norms), using the maximum row norm as an approximation as in \eqref{max-norm}, and using the power method (using number of iterations equal to $\epsilon^{-1}\log(\epsilon^{-1}b)$ with $\epsilon=0.01$).  Step size $\gamma$ used on each batch was again set as in \eqref{LSstep} (approximations for Lipschitz constants also apply to the step size computation) for the weighted cases and as in \cite[Corollary 3.2]{needell2014stochastic} for the uniform cases.  For cases when the exact step size computation was used rather than the corresponding approximation, we write ``(opt)''.  For example, the marker ``Max norm (opt)'' represents the case when we use the maximum row norm in the batch to approximate the Lipschitz constant, but still use the exact spectral norm when computing the optimal step size.  This of course is not practical, but we include these for demonstration.  Figure \ref{fig6} highlights the effect of using batching.  The first plot confirms that larger batch sizes yield significant improvement in terms of L2-error and convergence (note that again all curves eventually converge to a straight line due to the error reaching machine precision).  The bottom plot highlights the improvements in our proposed weighted batched SGD methods versus the classical, single functional and unweighted, SGD method. 

We next further investigate the effect of using the power method to approximate the Lipschitz constants used for the probability of selecting a given batch.  We again create the batches sequentially and fix them throughout the remainder of the method.  At the onset of the method, after creating the batches, we run the power method using $\epsilon^{-1}\log(\epsilon^{-1}b)$ iterations (with $\epsilon=0.01$) per batch, where we assume the work can evenly be divided among the $b$ cores.  We then determine the number of computational flops required to reach a specified solution accuracy using various batch sizes $b$.  The results are displayed in Figure \ref{fig6b}.  The first plot shows the convergence of the method; comparing with the first plot of Figure \ref{fig4}, we see that the convergence is slightly slower than when using the precise Lipschitz constants, as expected.  The last plot of Figure \ref{fig6b} shows the number of computational flops required to achieve a specified accuracy, as a function of the batch size.  We see that there appears to be an ``optimal'' batch size, around $b=8$ for this case, at which the savings in computational time computing the Lipschitz constants and the additional iterations required due to the inaccuracy are balanced.  

\item[\textbf{Correlated linear systems}] We next tested the method on systems with correlated rows, using a matrix with i.i.d. entries uniformly distributed on $[0,1]$.  When the rows are correlated in this way, the matrix is poorly conditioned and thus convergence speed suffers.   Here, we are particularly interested in the behavior when the rows also have high variance; in this case, row $k$ has uniformly distributed entries on $[0, \sqrt{3}k]$ so that each entry has variance $k^2$ like the Gaussian case above.   Figure \ref{figcor} displays the convergence results when creating the batches randomly and using weighting (left), creating the batches sequentially and using weighting (center), and creating the batches sequentially and using unweighted sampling (right).  Like Figure \ref{fig4}, we again see that batching the rows with larger row norms together and then using weighted sampling produces a speedup in convergence. 

\item[\textbf{Orthonormal systems}] As mentioned above, we expect the most notable improvement in the case when $\bA$ is an orthonormal matrix.  For this case, we run the method on a $200\times 200$ orthonormal discrete Fourier transform (DFT) matrix.  As seen in the first plot of Figure \ref{fig8}, we do indeed see significant improvements in convergence with batches in our weighted scheme. Of course, if the matrix is orthonormal one could also simply apply $\bA^*$ to solve the system, but we include these experiments for intuition and comparison.

\item[\textbf{Sparse systems}] Lastly, we show convergence for the batched weighted scheme on sparse Gaussian systems. The matrix is generated to have $20\%$ non-zero entries, and each non-zero entry is i.i.d. standard normal.   Figure \ref{fig8} (center) shows the convergence results.  The convergence behavior is similar to the non-sparse case, as expected, since our method does not utilize any sparse structure. 

\item[\textbf{Tomography data}] The final system we consider is a real system from tomography.  The system was generated using the Matlab Regularization Toolbox by P.C. Hansen (\url{http://www.imm.dtu.dk/~pcha/Regutools/}) \cite{hansen2007regularization}.  This creates a 2D tomography problem $\bA\bx= \bb$ for an $n\times d$ matrix with $n=fN^2$ and $d=N^2$, where $\bA$ corresponds to the absorption along a random line through an $N\times N$ grid.  We set $N=20$ and the oversampling factor $f=3$.  Figure \ref{fig8} (right) shows the convergence results. 

\item[\textbf{Noisy (inconsistent) systems}]  Lastly, we consider systems that are noisy, i.e. they have no exact solution.  We seek convergence to the least squares solution $\xls$.  We consider the same Gaussian matrix with variation as desribed above.  We first generate a consistent system $\bA\bx=\bb$ and then add a residual vector $\be$ to $\bb$ that has norm one, $\|\be\|_2 = 1$.  Since the step size in \eqref{LSstep} depends on the magnitude of the residual, it will have to be estimated in practice.  In our experiments, we estimate this term by an upper bound which is $1.1$ times larger in magnitude than the true residual $\|\bA\xls - \bb\|_2$.  In addition, we choose an accuracy tolerance of $\varepsilon = 0.1$.  
Not surprisingly, our experiments in this case show similar behavior to those mentioned above, only the method convergences to a larger error (which can be lowered by adjusting the choice of $\varepsilon$).  An example of such results in the correlated Gaussian case are shown in Figure \ref{fignoise}.  

\end{description}

\begin{figure}[!ht]
\includegraphics[width=3.3in]{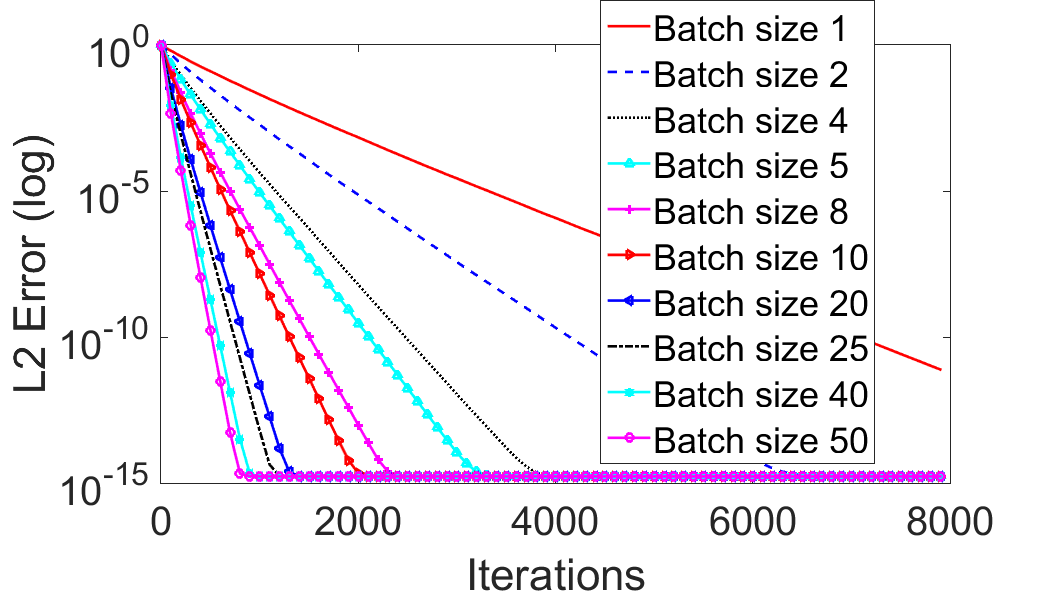} \hspace{-0.2in}\includegraphics[width=3.3in]{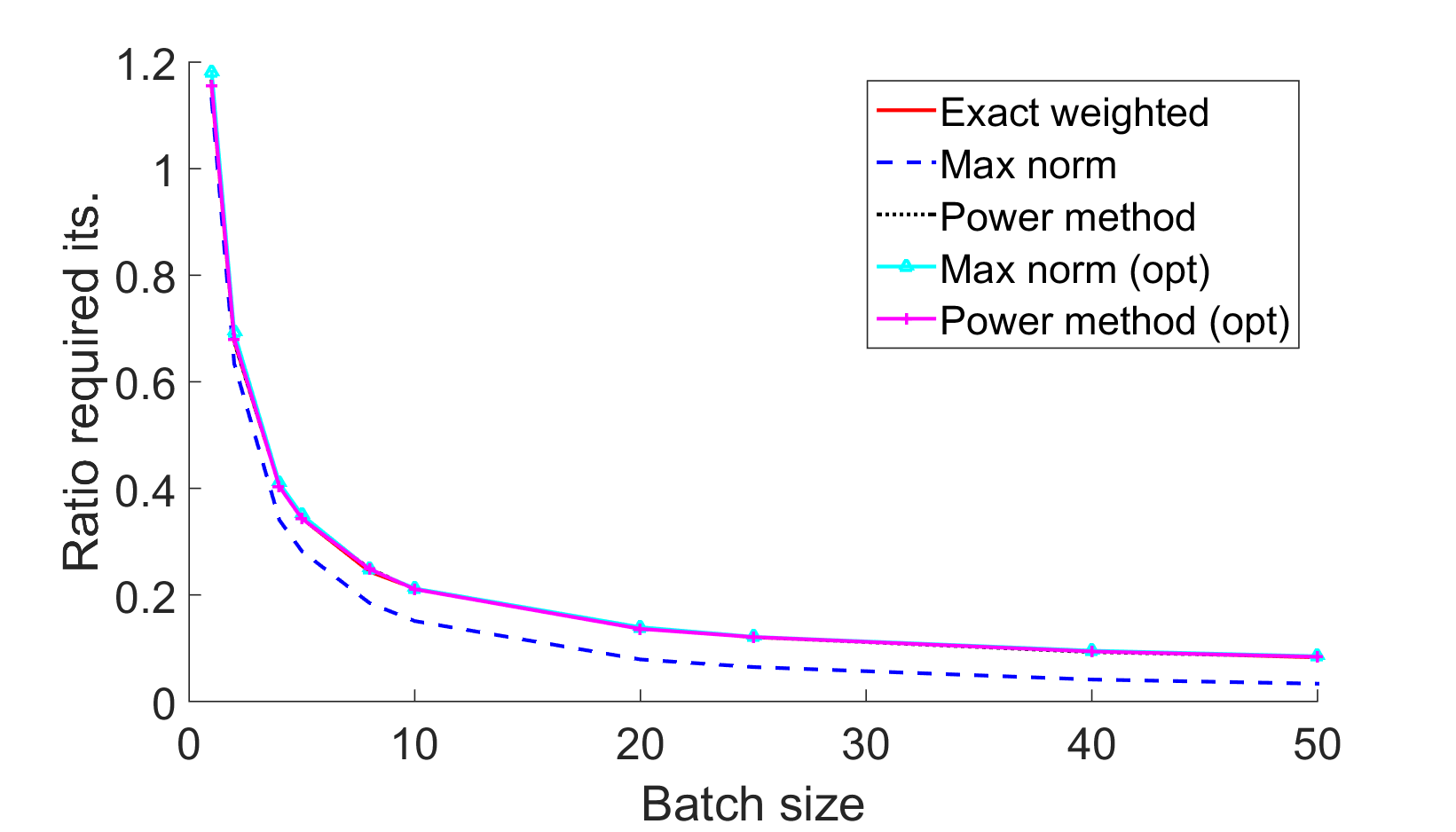}
\caption{\textbf{(Gaussian linear systems: convergence)} Mini-batch SGD on a Gaussian $1000\times 50$ system with various batch sizes; batches created randomly at onset.  Graphs show mean L2-error versus iterations (over 40 trials). Step size $\gamma$ used on each batch was as given in \eqref{LSstep} for the weighted cases and as in \cite[Corollary 3.2]{needell2014stochastic} for the uniform comparisons, where in all cases corresponding approximations were used to compute the spectral norms.  Top: Batches are selected using proposed weighted selection strategy \eqref{w:batchweights}.  Bottom: Ratio of the number of iterations required to reach an error of $10^{-5}$ for weighted batched SGD versus classical (single functional) uniform (unweighted) SGD.  The notation ``(opt)'' signifies that the optimal step size was used, rather than the approximation.  }\label{fig1}
\end{figure}

\begin{figure}[!ht]
\includegraphics[width=2.1in]{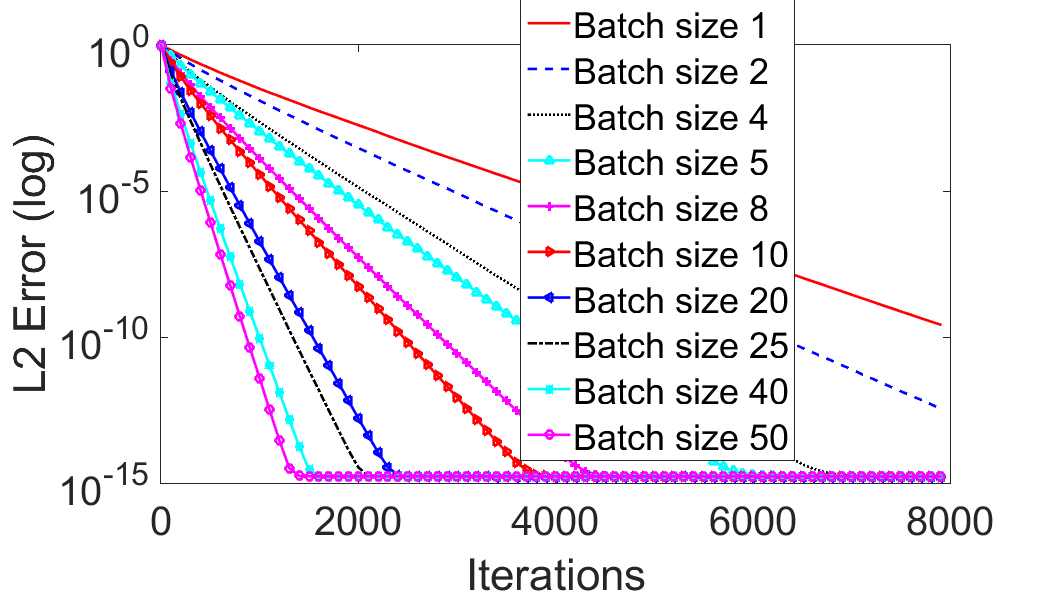} \hspace{-0.1in}\includegraphics[width=2.1in]{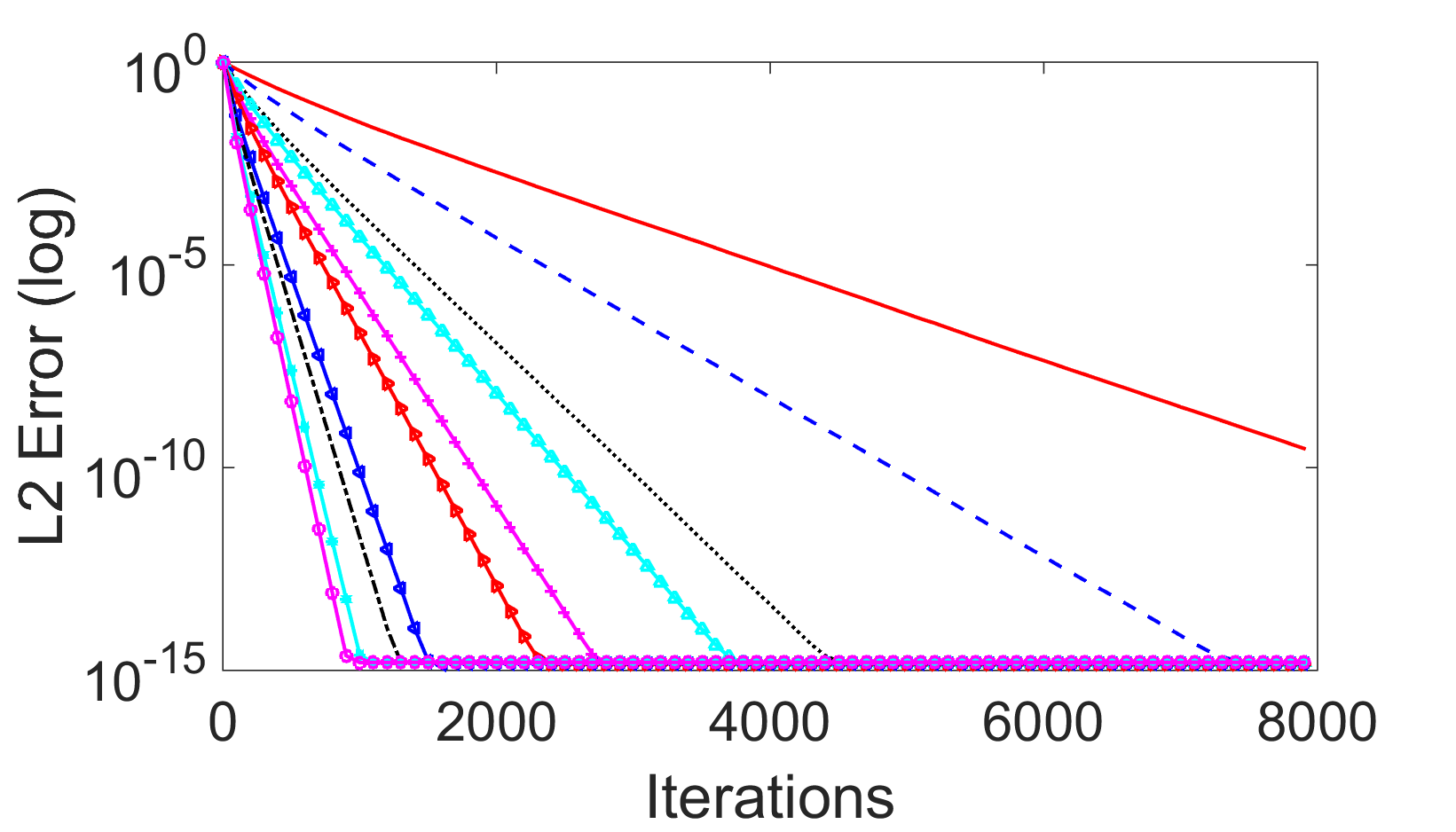}\hspace{-0.1in}\includegraphics[width=2.1in]{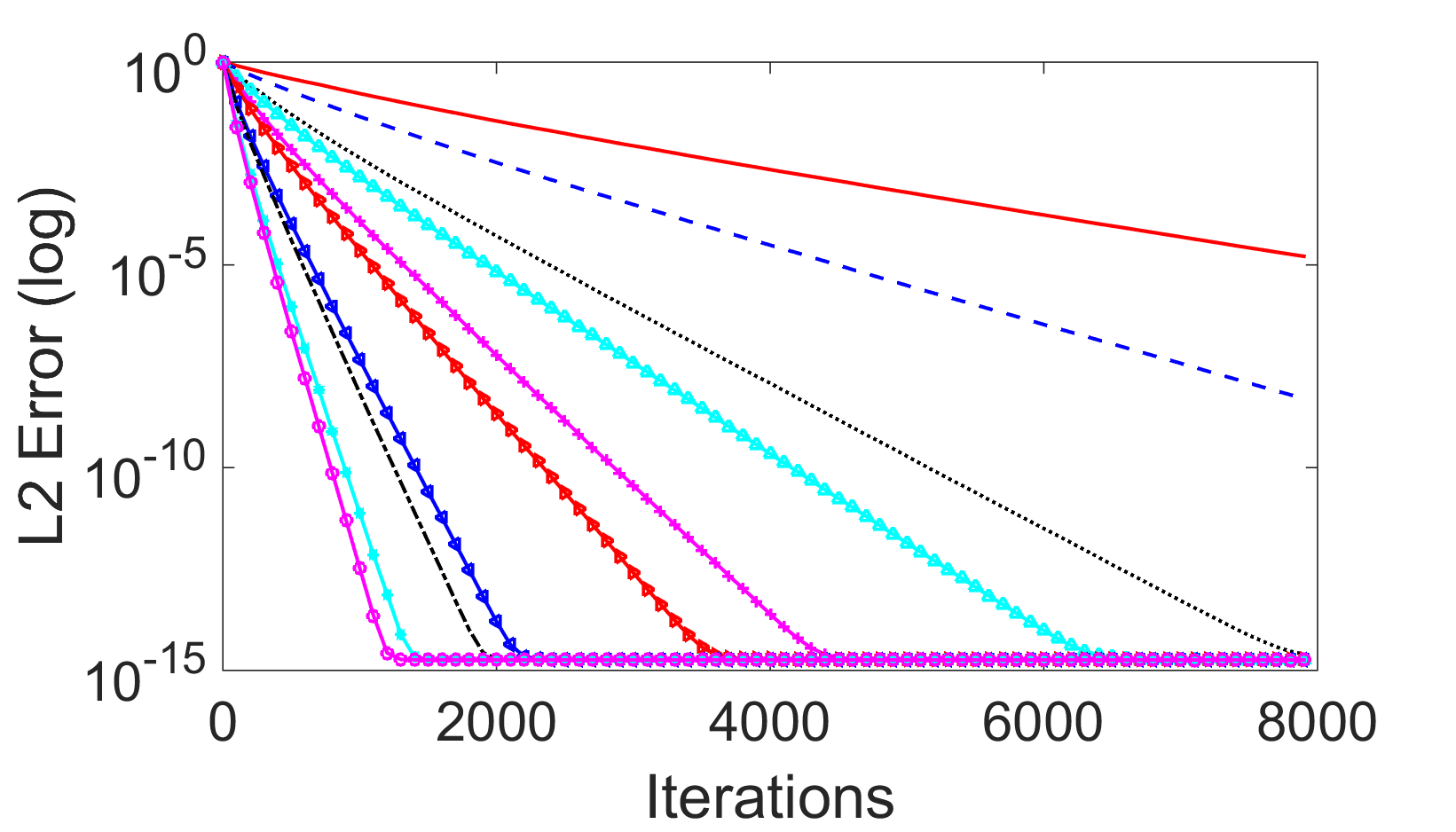}
\caption{\textbf{(Gaussian linear systems with variation: convergence)} Mini-batch SGD on a Gaussian $1000\times 50$ system whose entries in row $k$ have variance $k^2$, with various batch sizes.  Graphs show mean L2-error versus iterations (over 40 trials). Step size $\gamma$ used on each batch was as given in \eqref{LSstep} for weighted SGD and the optimal step size as in \cite[Corollary 3.2]{needell2014stochastic} for uniform sampling SGD.  Top Left: Batches are created randomly at onset, then selected using weighted sampling.  Top Right: Batches are created sequentially at onset, then selected using weighted sampling. Bottom: Batches are created sequentially at onset, then selected using uniform (unweighted) sampling.}\label{fig4}
\end{figure}

\begin{figure}[!ht]
\includegraphics[width=3.2in]{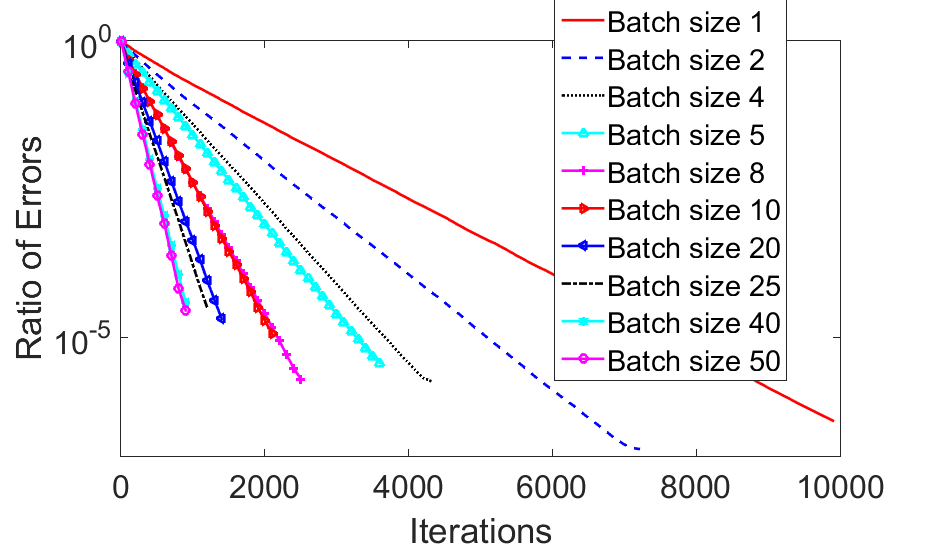} \hspace{-0.1in}\includegraphics[width=3.2in]{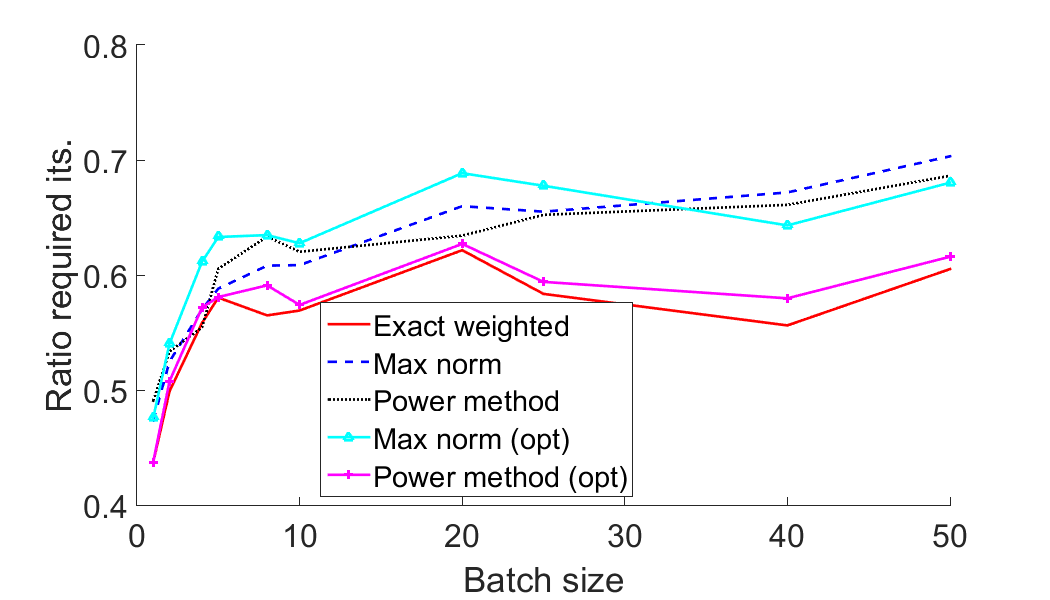}
\caption{\textbf{(Gaussian linear systems with variation: effect of weighting)} Mini-batch SGD on a Gaussian $1000\times 50$ system  whose entries in row $k$ have variance $k^2$, with various batch sizes; batches created sequentially at onset.  Step size $\gamma$ used on each batch was set as in \eqref{LSstep} (approximations for Lipschitz constants also apply to the step size computation) for the weighted cases and as in \cite[Corollary 3.2]{needell2014stochastic} for the uniform cases.  Top: Ratio of mean L2-error using weighted versus unweighted random batch selection (improvements appear when plot is less than one).  Bottom: Ratio of the number of iterations required to reach an error of $10^{-5}$ for various weighted selections versus unweighted random selection.  The notation ``(opt)'' signifies that the optimal step size was used, rather than the approximation. }\label{fig5}
\end{figure}

\begin{figure}[!ht]
\includegraphics[width=3.2in]{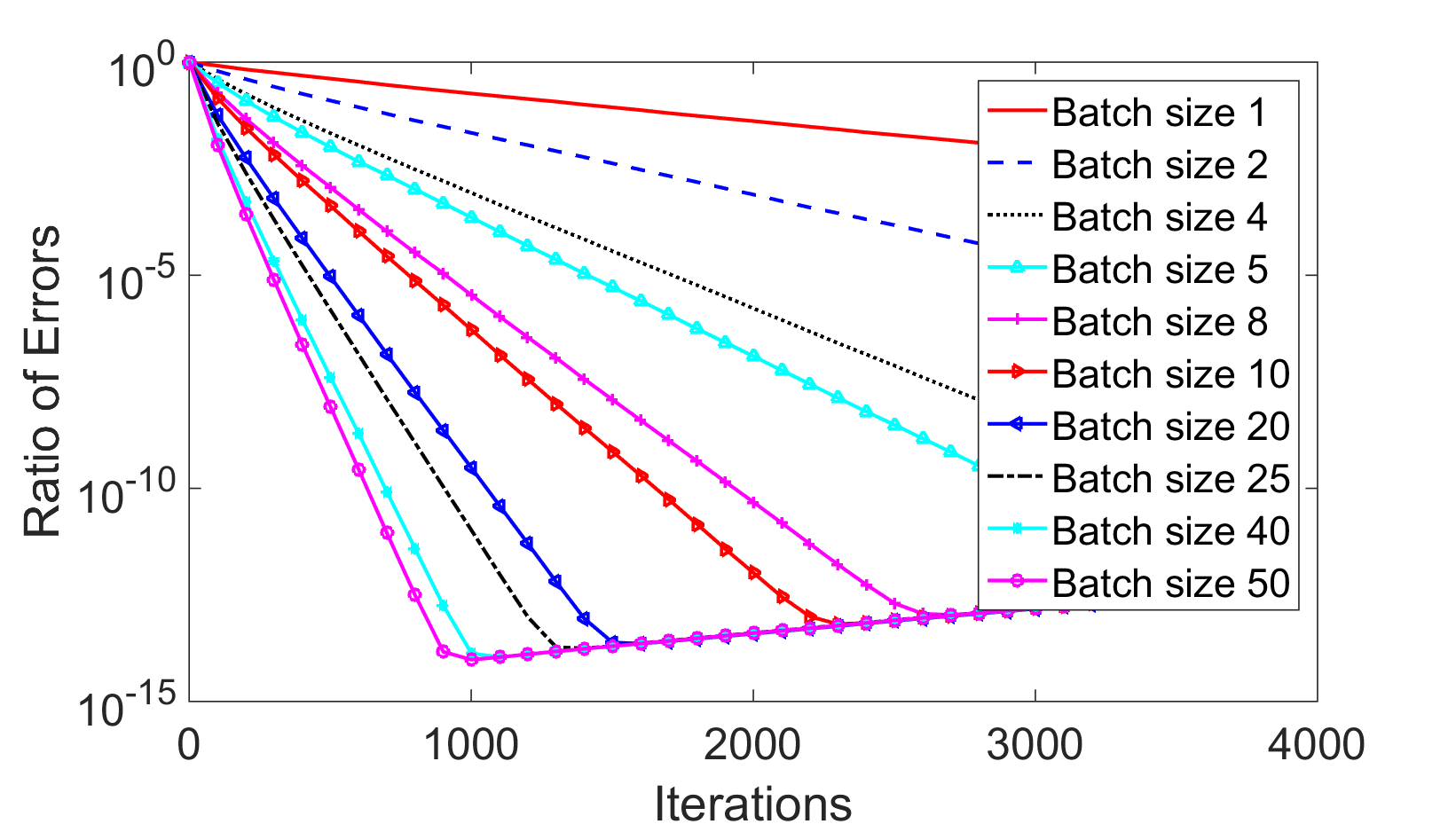} \hspace{-0.1in}\includegraphics[width=3.2in]{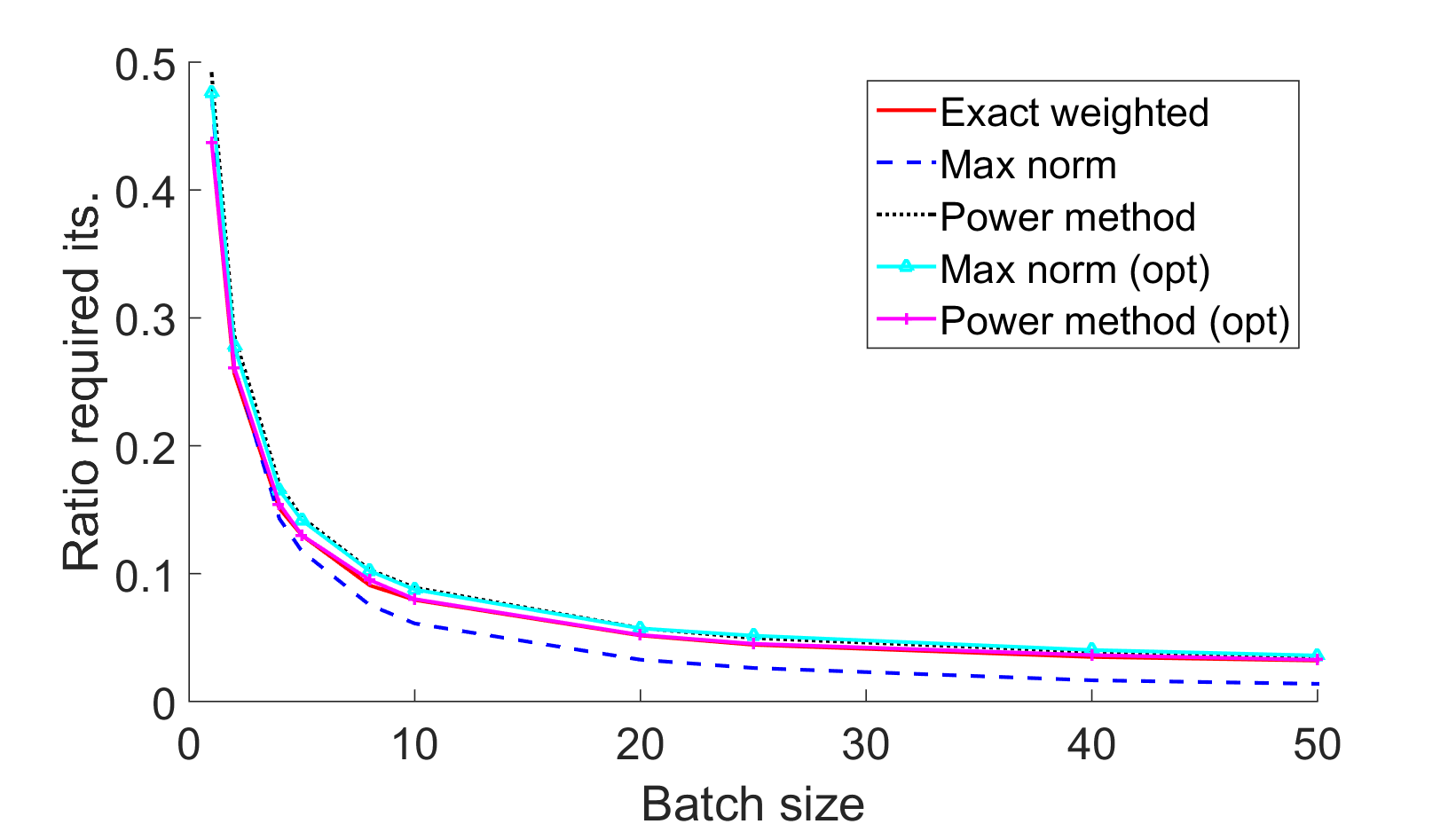}
\caption{\textbf{(Gaussian linear systems with variation: effect of batching)} Mini-batch SGD on a Gaussian $1000\times 50$ system  whose entries in row $k$ have variance $k^2$, with various batch sizes; batches created sequentially at onset.  Step size $\gamma$ used on each batch was set as in \eqref{LSstep} (approximations for Lipschitz constants also apply to the step size computation) for the weighted cases and as in \cite[Corollary 3.2]{needell2014stochastic} for the uniform cases.  Top: Ratio of mean L2-error using weighted batched SGD versus classical (single functional) weighted SGD (improvements appear when plot is less than one).  Bottom: Ratio of the number of iterations required to reach an error of $10^{-5}$ for various weighted selections with batched SGD versus classical (single functional) uniform (unweighted) SGD.  The notation ``(opt)'' signifies that the optimal step size was used, rather than the approximation. }\label{fig6}
\end{figure}

\begin{figure}[!ht]
\includegraphics[width=2.1in]{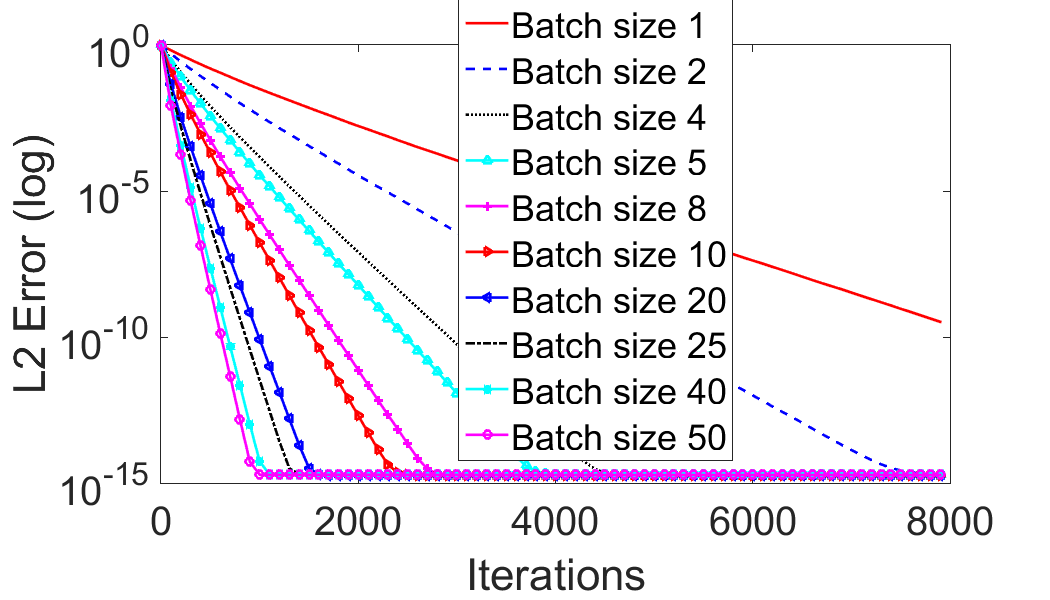} \hspace{-0.1in}\includegraphics[width=2.1in]{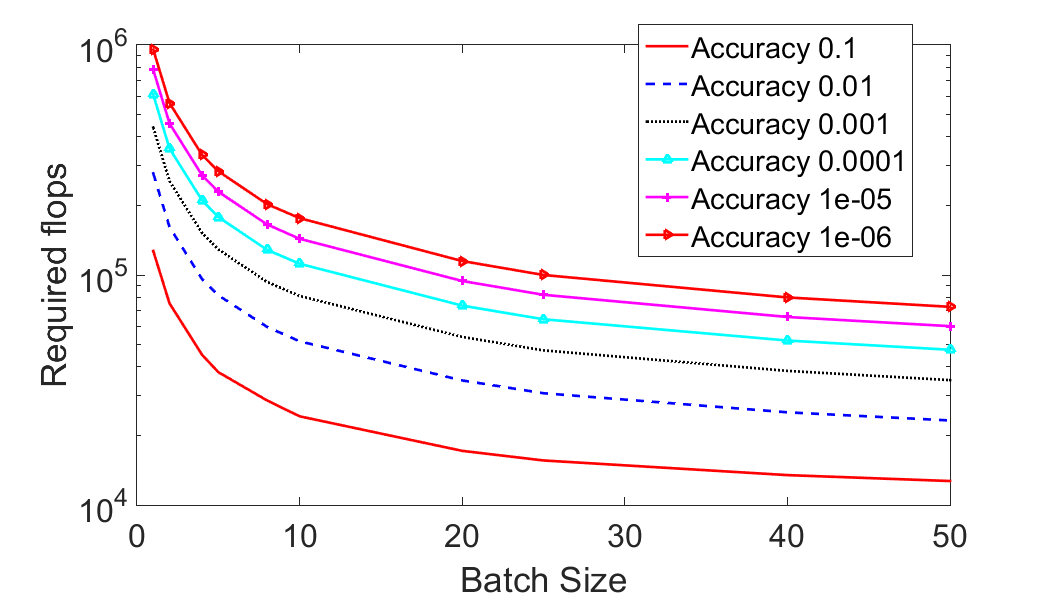}\hspace{-0.1in}\includegraphics[width=2.1in]{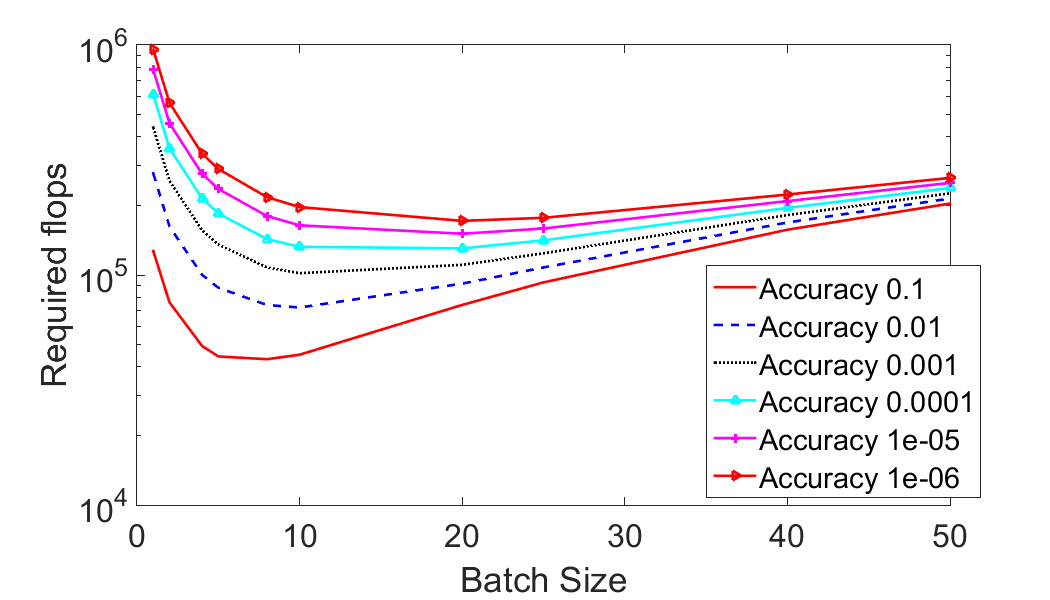}
\caption{\textbf{(Gaussian linear systems with variation: using power method)} Mini-batch SGD on a Gaussian $1000\times 50$ system  whose entries in row $k$ have variance $k^2$, with various batch sizes; batches created sequentially at onset.  Step size $\gamma$ used on each batch was set as in \eqref{LSstep} (approximations for Lipschitz constants also apply to the step size computation) for the weighted cases and as in \cite[Corollary 3.2]{needell2014stochastic} for the uniform cases.  Lipschitz constants for batches are approximated by using $\epsilon^{-1}\log(\epsilon^{-1}b)$ (with $\epsilon=0.01$) iterations of the power method. Top Left: Convergence of the batched method.  Next: Required number of computational flops to achieve a specified accuracy as a function of batch size when computation is shared over $b$ cores (top right) or done on a single node (bottom). }\label{fig6b}
\end{figure}

\begin{figure}[!ht]
\includegraphics[width=2.1in]{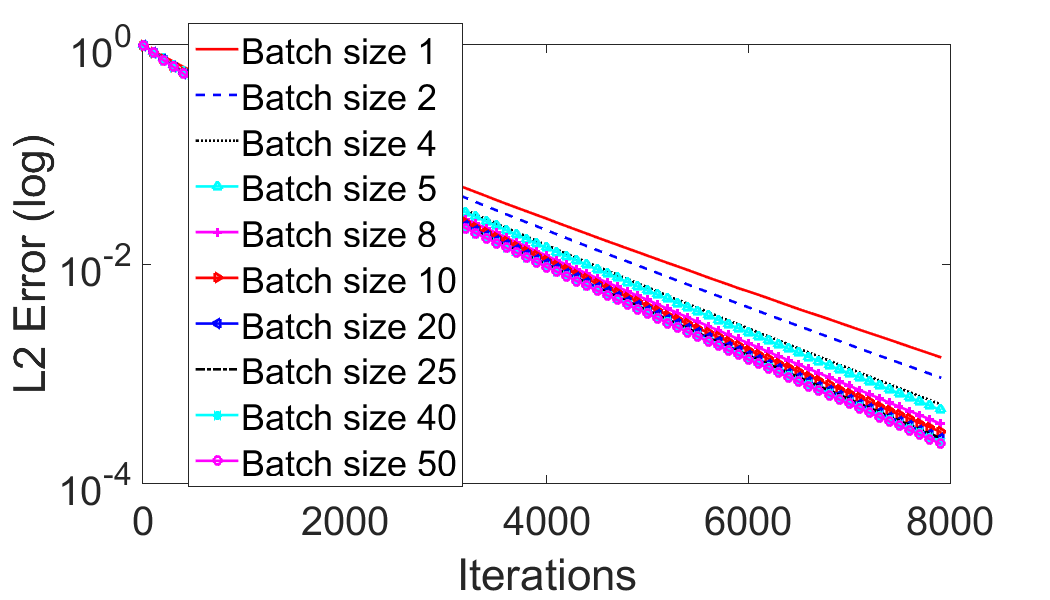} \hspace{-0.1in}\includegraphics[width=2.1in]{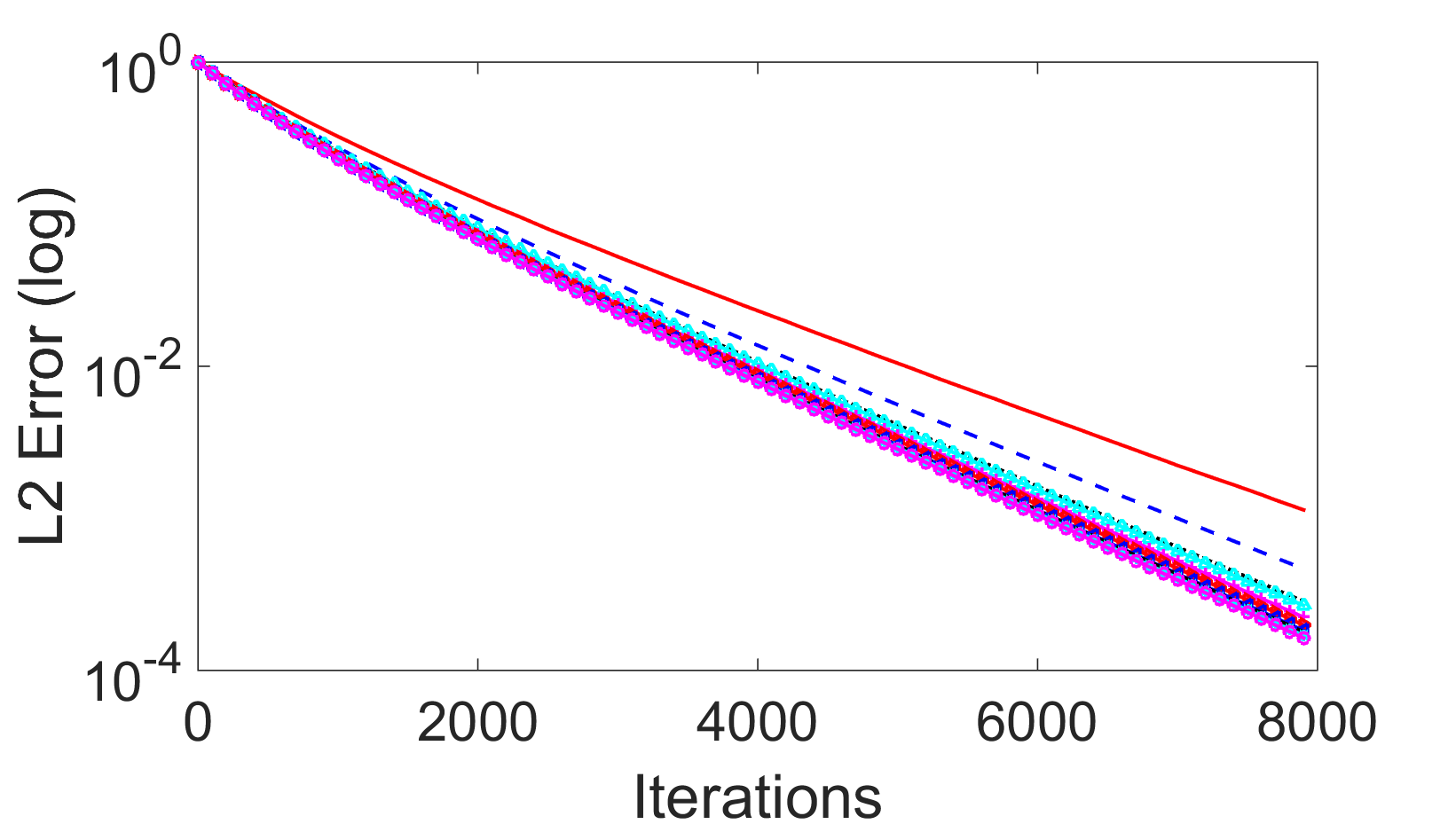}\hspace{-0.1in}\includegraphics[width=2.1in]{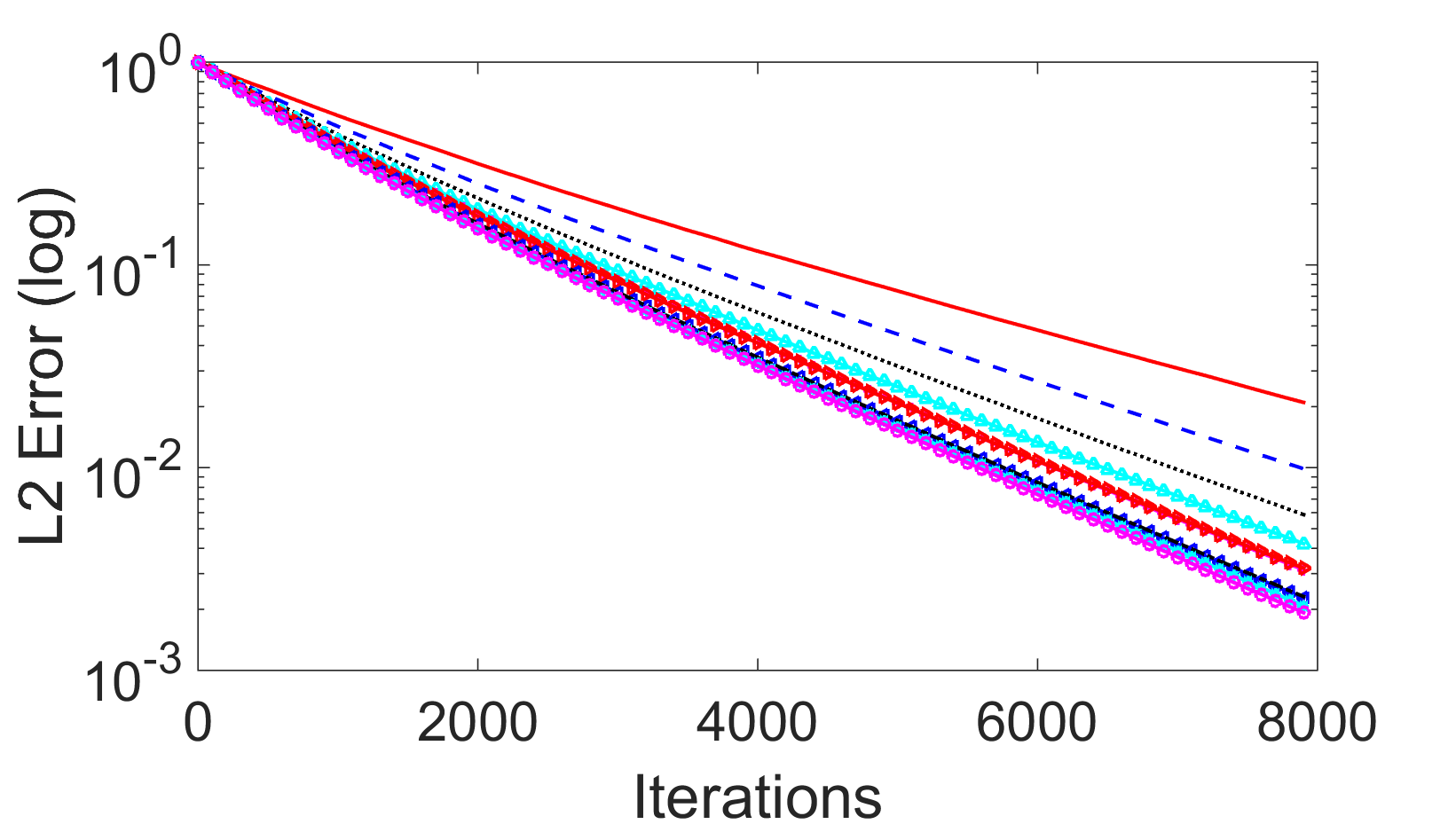}
\caption{\textbf{(Correlated systems with variation: convergence)} Mini-batch SGD on a uniform $1000\times 50$ system whose entries in row $k$ have variance $k^2$, with various batch sizes.  Graphs show mean L2-error versus iterations (over 40 trials). Step size $\gamma$ used on each batch was set as in \eqref{LSstep} (approximations for Lipschitz constants also apply to the step size computation) for the weighted cases and as in \cite[Corollary 3.2]{needell2014stochastic} for the uniform cases.  Top Left: Batches are created randomly at onset, then selected using weighted sampling.  Top Right: Batches are created sequentially at onset, then selected using weighted sampling. Bottom: Batches are created sequentially at onset, then selected using uniform (unweighted) sampling.}\label{figcor}
\end{figure}

\begin{figure}[!ht]
\includegraphics[width=2.1in]{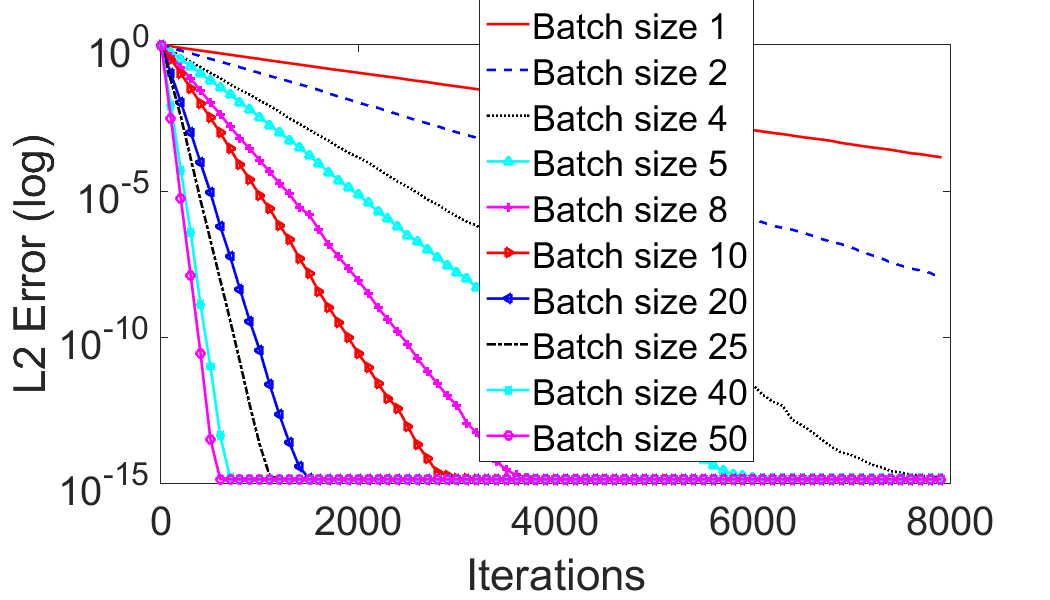} \hspace{-0.1in}\includegraphics[width=2.1in]{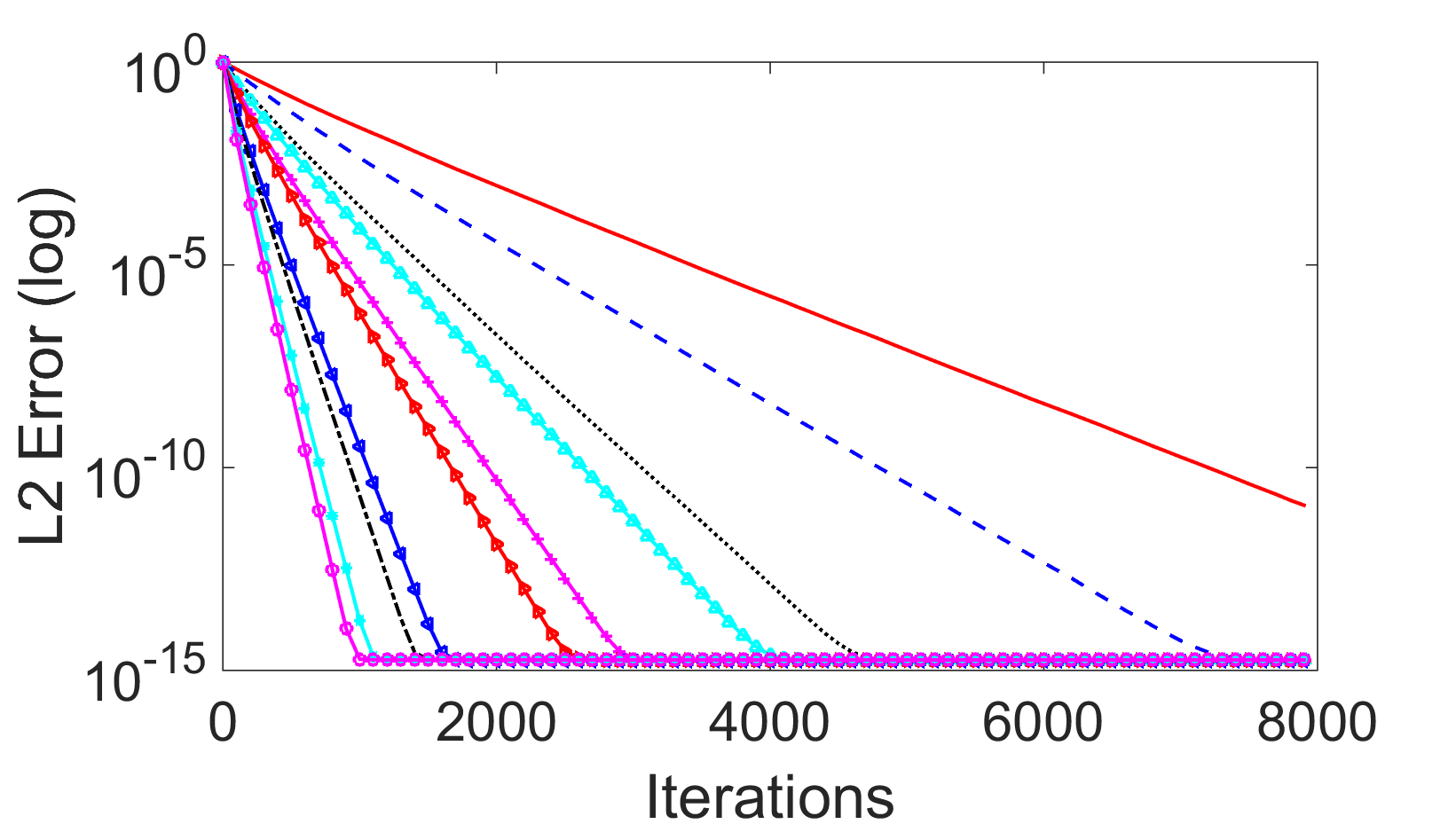}\hspace{-0.1in}\includegraphics[width=2.1in]{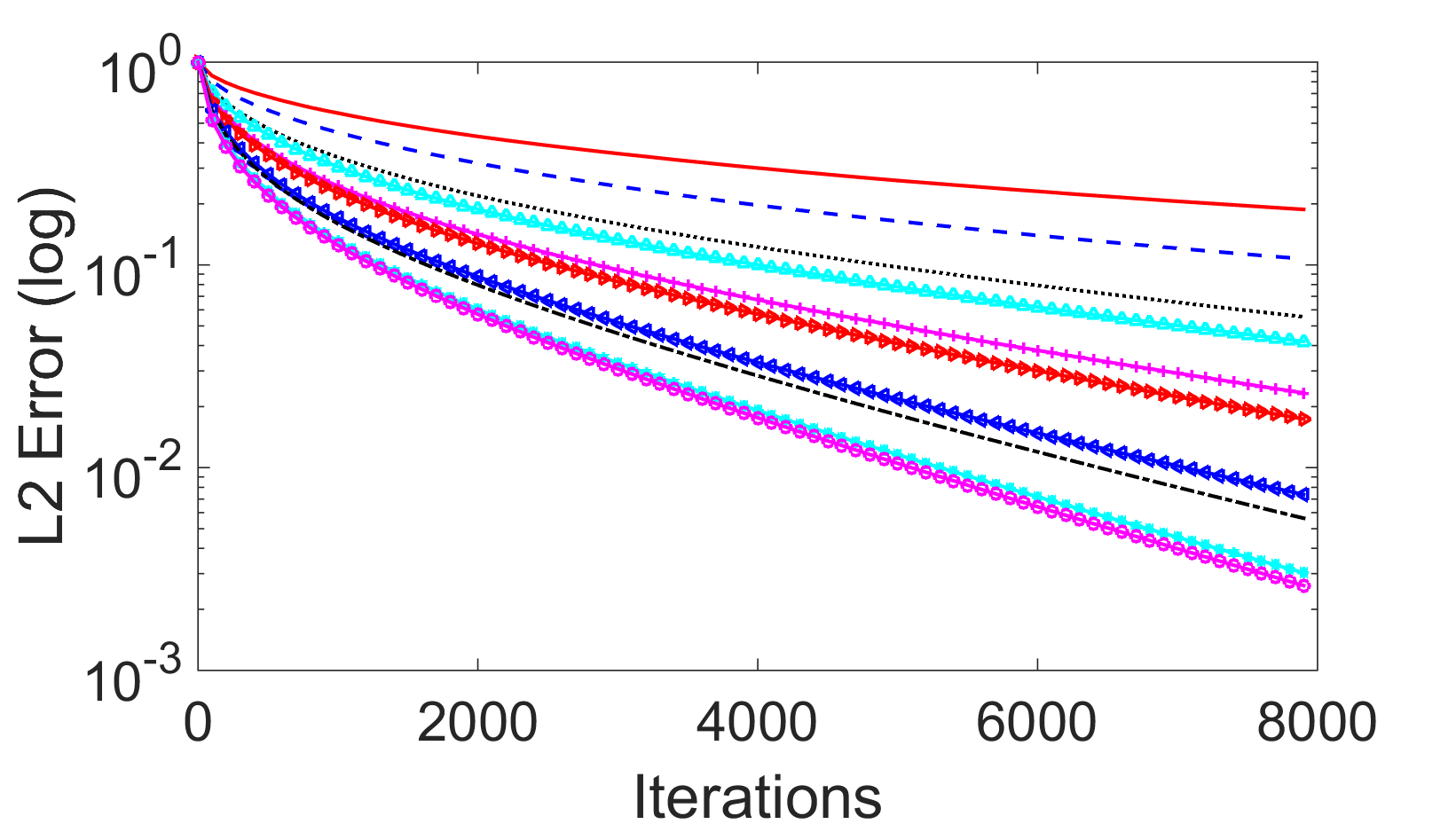}
\caption{\textbf{(Orthonormal, sparse, and tomography systems: convergence)} Mini-batch SGD on two systems for various batch sizes; batches created randomly at onset.  Graphs show mean L2-error versus iterations (over 40 trials). Step size $\gamma$ used on each batch was set as in \eqref{LSstep}.  Top Left: Matrix is a $200\times 200$ orthonormal discrete Fourier transform (DFT).  Top Right: $1000\times 50$ matrix is a sparse standard normal matrix with density $20\%$.  Bottom: Tomography data ($1200\times 400$ system). }\label{fig8}
\end{figure}

\begin{figure}[!ht]
\includegraphics[width=3.2in]{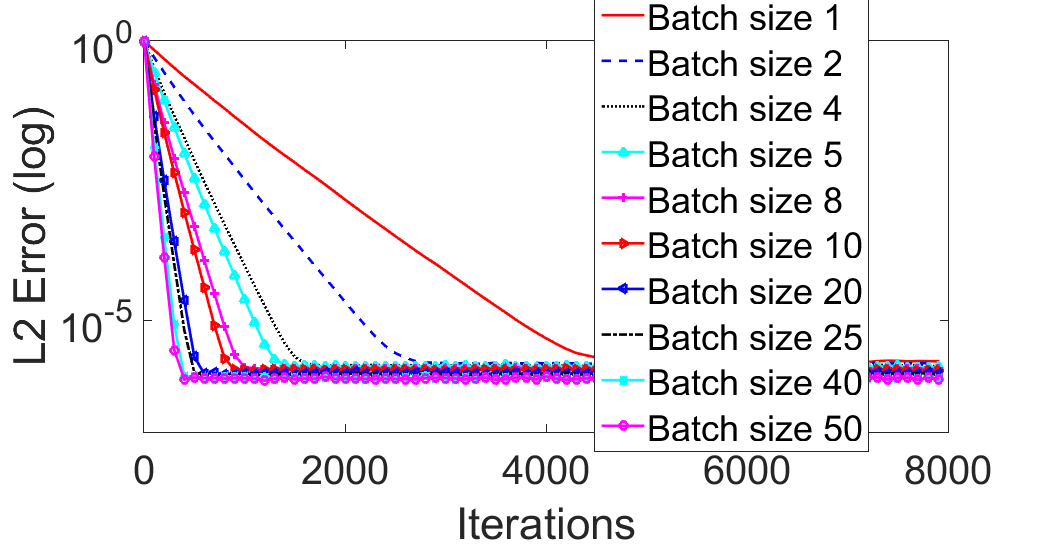} \hspace{-0.1in}\includegraphics[width=3.2in]{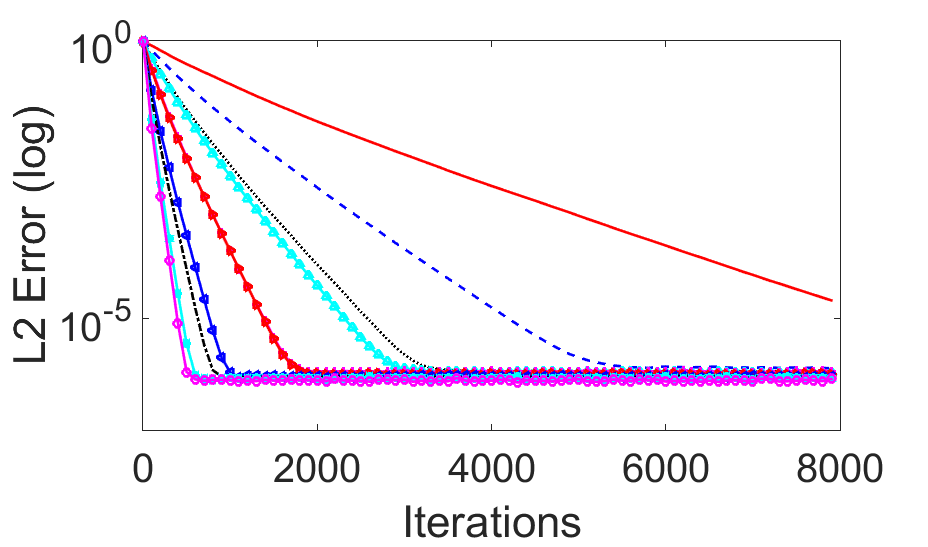}\\
\includegraphics[width=3.2in]{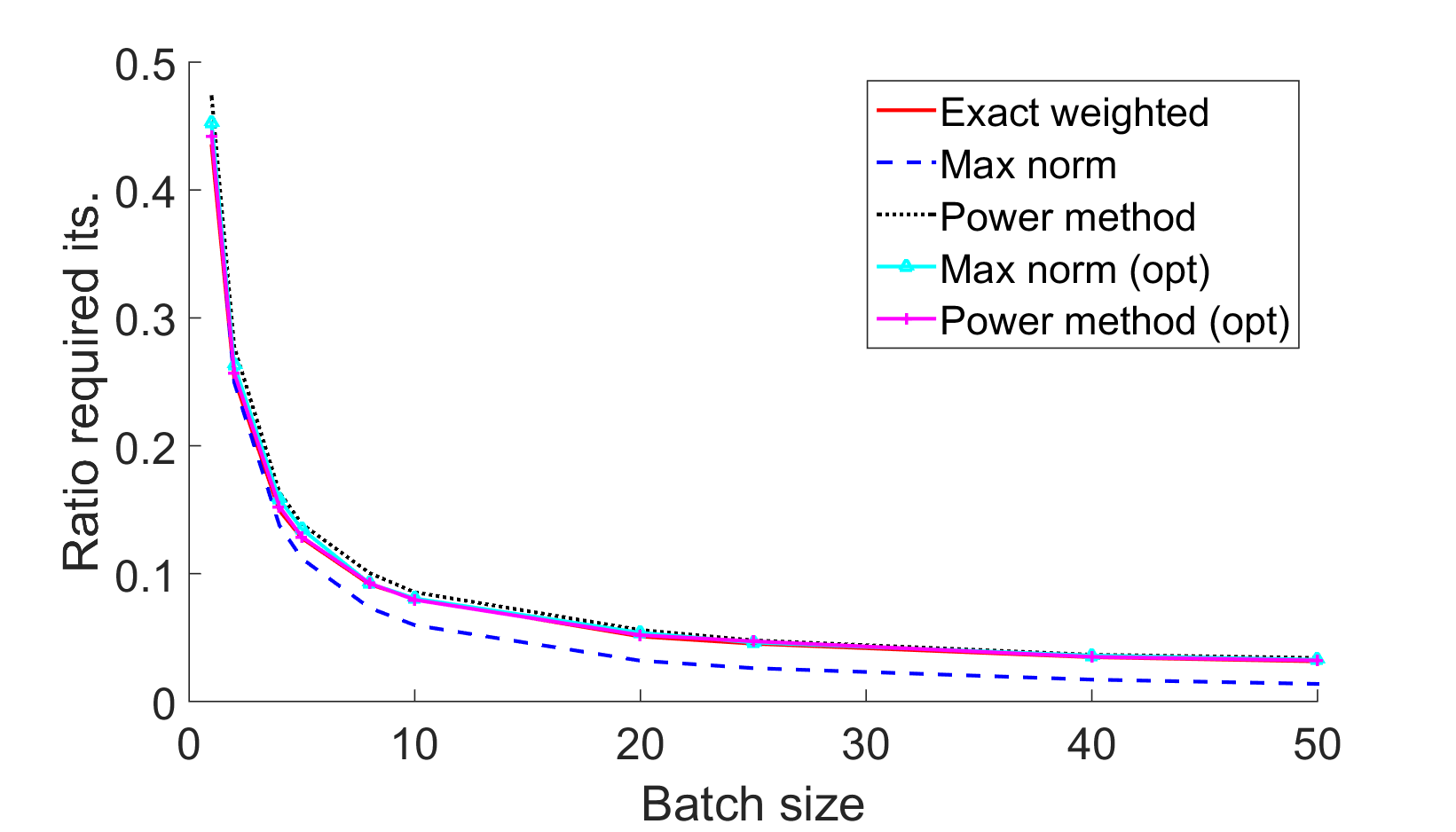}\hspace{-0.1in}\includegraphics[width=3.2in]{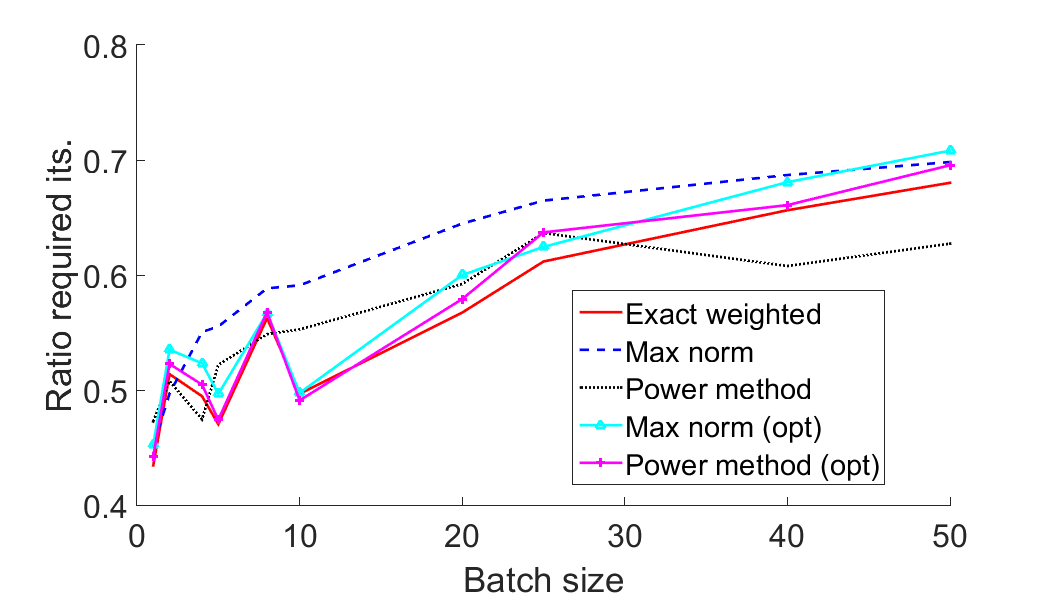}\\
\caption{\textbf{(Noisy systems: convergence)} Mini-batch SGD on a Gaussian $1000\times 50$ system whose entries in row $k$ have variance $k^2$, with various batch sizes.  Noise of norm $1$ is added to system to create an inconsistent system.  Graphs show mean L2-error versus iterations (over 40 trials). Step size $\gamma$ used on each batch was set as in \eqref{LSstep} for the weighted case and as in \cite[Corollary 3.2]{needell2014stochastic} for the uniform case; the residual $\bA\xls - \bb$ was upper bounded by a factor of 1.1 in all cases.    Top: Batches are created sequentially at onset, then selected using weighted sampling. Second Plot: Batches are created sequentially at onset, then selected using uniform (unweighted) sampling.  Third Plot: Ratio of the number of iterations required to reach an error of $10^{-5}$ for various weighted selections with batched SGD versus classical (single functional) uniform (unweighted) SGD.  Bottom: Ratio of the number of iterations required to reach an error of $10^{-5}$ for various weighted selections with batched SGD versus classical uniform (unweighted) SGD as a function of batch size. }\label{fignoise}
\end{figure}

\section{Conclusion}\label{sec:conc}
We have demonstrated that using a weighted sampling distribution along with batches of functionals in SGD can be viewed as complementary approaches to accelerating convergence.  We analyzed the benefits of this combined framework for both smooth and non-smooth functionals, and outlined the specific convergence guarantees for the smooth least squares problem and the non-smooth hinge loss objective.  We discussed several computationally efficient approaches to approximating the weights needed in the proposed sampling distributions and showed that one can still obtain approximately the same improved convergence rate.  We confirmed our theoretical arguments with experimental evidence that highlight in many important settings one can obtain significant acceleration, especially when batches can be computed in parallel. In this parallel setting, we of course see that the improvement increases as the batch size increases, meaning that one should unsurprisingly take advantage of all the cores available. However, we also notice that there may be a tradeoff in computation when the weighting scheme needs to be calculated a priori, and that a non-trivial optimal batch size may exist in that case.  It will be interesting future work to optimize the batch size and other parameters when the parallel computing must be done asynchronously, or in other types of geometric architectures.


%
\section*{Acknowledgements}
The authors would like to thank Anna Ma for helpful discussions about this paper, and the reviewers for their thoughtful feedback. Needell was partially supported by NSF CAREER grant $\#1348721$ and the Alfred P. Sloan Foundation.  Ward was partially supported by NSF CAREER grant $\#1255631$. 

%
%
%

\bibliography{../rk}
\bibliographystyle{IEEEbib_deanna}

\end{document}